\newtheorem{lemma}{Lemma}[section]
\newtheorem{remark}[lemma]{Remark}
\newtheorem{proposition}[lemma]{Proposition}
\newtheorem{theorem}[lemma]{Theorem}
\newtheorem{corollary}[lemma]{Corollary}
\newtheorem{setting}[lemma]{Setting}
\newcommand{\1}{\mathbbm{1}}
\providecommand{\F}{{\ensuremath{\mathbbm{F}}}}
\providecommand{\N}{{\ensuremath{\mathbbm{N}}}}
\providecommand{\Z}{{\ensuremath{\mathbbm{Z}}}}
\providecommand{\R}{{\ensuremath{\mathbbm{R}}}}
\providecommand{\E}{{\ensuremath{\mathbb{E}}}}
\renewcommand{\P}{{\ensuremath{\mathbb{P}}}}
\newcommand{\var}{\mathrm{Var}}
\newcommand{\epsUp}{\overline{\varepsilon}}
\newcommand{\epsDown}{\underline{\varepsilon}}
\newcommand{\uniform}{\ensuremath{\mathcal{R}}}
\newcommand{\unif}{\ensuremath{\mathfrak{r}}}
\newcommand{\vastleft}[2]{\left#2 \rule{0pt}{#1}\kern-.25ex\right.}
\newcommand{\vastright}[2]{\left. \rule{0pt}{#1}\kern-.25ex \right#2}
\renewcommand{\gets}{\curvearrowleft}
\title{
Multilevel Picard approximations for\\
McKean-Vlasov stochastic differential equations}
\author{Martin Hutzenthaler\thanks{Faculty of Mathematics, University of Duisburg-Essen, Essen, Germany;\newline\hspace*{.5cm} e-mail: \texttt{martin.hutzenthaler}\textcircled{\texttt{a}}\texttt{uni-due.de}}
\and
\and 
Tuan Anh Nguyen\thanks{Faculty of Mathematics, University of Bielefeld, Bielefeld, Germany; \newline\hspace*{.5cm} e-mail: \texttt{tnguyen}\textcircled{\texttt{a}}\texttt{math.uni-bielefeld.de}}
}
\begin{document}\sloppy

\title{
Full history recursive multilevel Picard approximations
suffer
from\\
the curse of dimensionality
for the\\
Hamilton-Jacobi-Bellman equation of
a stochastic control problem}
\maketitle
{
\let\thefootnote\relax\footnotetext{\emph{Key words and phrases:} multilevel Picard approximation, stochastic control problem, semilinear heat equation, curse of dimensionality}
\let\thefootnote\relax\footnotetext{\emph{AMS 2020 subject classification}: 65C99, 60H99}
}
\begin{abstract}
  Full history recursive multilevel Picard (MLP) approximations
  have been proved to overcome the curse of dimensionality
  in the numerical approximation of semilinear heat equations
  with nonlinearities which are globally Lipschitz continuous
  with respect to the maximum-norm.
  Nonlinearities in
  Hamilton-Jacobi-Bellman equations in stochastic control theory,
  however, are often (locally) Lipschitz continuous with respect
  to the standard Euclidean norm.
  In this paper we prove the surprising fact
  that MLP approximations
  for one such example equation suffer from the curse of dimensionality.
\end{abstract}
\tableofcontents
\section{Introduction}
  High-dimensional
  second-order
  partial differential equations (PDEs)
  are abundant in many important areas;
  see e.g.\ the surveys~\cite{ElKarouiPengQuenez1997,BachmayrSchneiderUschmajew2016}.
  In this article, we focus on Hamilton-Jacobi-Bellman (HJB) equations
  in stochastic control problems
  in which, e.g., in mean field games the dimension
  is related to the number of agents;
  see, e.g., \cite{LasryLions2006a,LasryLions2006b,HuangMalhameCaines2006}.
  It is often unclear whether such problems suffer from the curse of dimensionality.

  To illustrate our results we consider in this paragraph the following classical
  Gaussian
  control problem in which we control the drift
  of all components of a Brownian motion except for one.
  Let $(\Omega,\mathcal{F},\P)$ be a probability space, let $d\in\N=\{1,2,\ldots\}$,
  let $W\colon[0,1]\times\Omega\to\R^d$ be a standard Wiener process,
  let $\mathbb{F}=(\mathbb{F}_t)_{t\in[0,1]}$ be the complete and
  right-continuous filtration which is generated by $W$,
  for every $s\in[0,1]$, $x\in\R^d$, and every
  $(\mathbb{F}_t)_{t\in[s,1]}$-adapted and
  measurable function $C\colon[s,T]\times\Omega\to\R^d$
  with
  $\sup_{s\in[0,1]}\|C_s\|\leq 1$
  let
  $X_{s,\cdot}^{x,C}\colon[s,1]\times\Omega\to\R^d$ satisfy
  for all $t\in[s,1]$ that
  \begin{equation}  \begin{split}\label{eq:controlled.SDE}
    X_{s,t}^{x,C}=x+\int_s^t C_r\,dr+W_t-W_s,
  \end{split}     \end{equation}
  let  $u\colon[0,1]\times\R^d\to\R$
  satisfy for all $s\in[0,1]$, $x\in\R^d$ that
  \begin{equation}  \begin{split}\label{eq:intro.stochastic.control}
    u(s,x)=\sup_{\substack{C\colon[s,1]\times\Omega\to\R^d
    \text{ measurable and $(\mathbb{F}_t)_{t\in[s,1]}$-adapted}\\
    \sup_{s\in[0,1]}\|C_s\|\leq 1,\;
    (1,0,\ldots,0)C\equiv0}}\E\Big[
    \big|X_{s,1}^{x,C}(1)\big|\Big].
  \end{split}     \end{equation}
 Bellman's seminal dynamic programming principle shows 
  that the value function $u$ satisfies the 
  HJB equation, that is, for all $s\in[0,1]$, $x=(x_1,\ldots,x_d)\in\R^d$ that
  $u(1,x)=|x_1|$ and
  \begin{equation}  \begin{split}\label{eq:semilinear.heat}
    &\tfrac{\partial}{\partial s}u(s,x)
    +\tfrac{1}{2}\Delta_xu(s,x)
    +\Big(\sum_{j=2}^d\big|(\tfrac{\partial}{\partial x_j}u)(s,x)\big|^2\Big)^{\frac{1}{2}}
\\&
=
    \tfrac{\partial}{\partial s}u(s,x)
    +\tfrac{1}{2}\Delta_xu(s,x)
    +\sup_{c=(c_1,\ldots,c_d)\in\R^d\colon c_1=0,\|c\|\leq 1}
    \Big[\langle \nabla_xu(s,x),c\rangle
    \Big]
    =0;
  \end{split}     \end{equation}
  cf., e.g., \cite[Proposition 4.3.5]{YongZhou1999_Stochastic_controls}.
  It is not difficult to see that
  the value function satisfies for
all $s\in[0,1]$, $x=(x_1,\ldots,x_d)\in\R^d$ that
$u(s,x)=\E\big[|x_1+W_1(1)-W_s(1)|\big]$.

The only known approximation method in the literature which provably
does not
suffer from the curse of dimensionality in the numerical approximation
of semilinear heat equations with Lipschitz continuous and
gradient-independent nonlinearities is the
full history multilevel Picard approximation method (MLP)
introduced \cite{e2021multilevel} and improved in \cite{hutzenthaler2020overcomingDefault,hutzenthaler2020multilevelXglob,hutzenthaler2021overcoming};
cf.\ also \cite{beck2020overview} and \cite{EHutzenthalerJentzenKruse2017}
for comparisons with other
approximation methods.
For further results on this method see, e.g.,
\cite{beck2024overcoming,becker2020numerical,hutzenthaler2020multilevel,hutzenthaler2020proof,hutzenthaler2020overcoming,hutzenthaler2022multilevel,hutzenthaler2022multilevelXnonglob,neufeld2023multilevel,neufeld2025multilevel}.

The main contribution of this paper is to show via a counterexample
that the MLP approximation method in general does not overcome the curse
of dimensionality for stochastic control problems in the sense that
the approximation error might grow faster in the dimension than any polynomial.
This is surprising for several reasons. First, the target function in our
counterexample depends only on the first spatial coordinate so that the
approximation problem for this target function does not suffer from the
curse of dimensionality.
Second, the approximation errors between this solution
and  fixed-point iterations
with respect
to a Feynman-Kac-type fix-point equation
grow at most polynomially in the dimension; see \cite[Proposition 4.1]{hutzenthaler2021speed}.
Third, the approximation errors
of the MLP approximation method
were shown to
grow at most polynomially in the dimension if the nonlinearity
is globally Lipschitz continuous
with respect to the maximum-norm and the Lipschitz constant is counded in the dimension;
see \cite{hutzenthaler2021overcoming,neufeld2025multilevel}.
In fact, this latter
condition is not satisfied by the nonlinearity
 $\R^d\ni(z_1,\ldots,z_d)\mapsto (\sum_{j=2}^d|z_j|^2)^{1/2}\in\R$
since
\begin{equation}  \begin{split}
  \sup_{x=(x_1,\ldots,x_d),y=(y_1,\ldots,y_d)\in\R^d\colon x\neq y}
  \tfrac{\Big|\big(\sum_{j=2}^d|x_j|^2\big)^{1/2}
-\big(\sum_{j=2}^d|y_j|^2\big)^{1/2}\Big|}
{\max_{j\in\{1,\ldots,d\}}|x_j-y_j|}=\sqrt{d-1}
\end{split}     \end{equation}
is unbounded in the dimension $d$.
%
%
%
It turns out, that the latter condition on the Lipschitz constants
is essential at least to some extent.
More precisely, the following theorem, Theorem \ref{thm:intro}, shows
via a counterexample that is not enough that the nonlinearity is globally Lipschitz
continuous and that the Lipschitz constant is bounded in the dimension.

\begin{theorem}\label{thm:intro}
Let 
$\Theta=\bigcup_{n\in\N}\Z^n$,
for every $d\in\N$ let 
$\lVert\cdot \rVert_{\R^d},f_d,g_d\colon \R^d\to\R$,
$u_d \colon [0,1]\times\R^d\to \R$
 satisfy
for all $t\in[0,1)$, $x=(x_1,x_2,\ldots,x_d)\in\R^d$ that 
$
 \lVert x \rVert_{\R^d}= \bigl({\textstyle\sum_{j=1}^{d}}\lvert x_j\rvert^2\bigr)^{1/2}$,
$
g_d(x)=\lvert x_1\rvert$, 
$ f_d(x)=\bigl({\textstyle\sum_{j=2}^{d}}\lvert x_j\rvert^2\bigr)^{1/2}$,
$u_d|_{[0,1)\times \R^d}\in C^{1,2}([0,1)\times \R^d,\R)$,
\begin{align}\label{eq:intro.PDE}
\bigl( \tfrac{ \partial }{ \partial t } u_d \bigr)( t, x ) + \tfrac{ 1 }{ 2 } ( \Delta_x u_d )( t, x ) + (f_d\circ \nabla_x u_d)  (t, x)  = 0, \quad\text{and}\quad
u_d(1,x)=g_d(x)
,
\end{align}
let $(\Omega, \mathcal{F},\P)$ be a probability space,
let $\unif^\theta\colon \Omega\to (0,1] $, $\theta\in\Theta$,
be i.i.d.\ random variables which satisfy for all $b\in [0,1] $ that 
$\P(\unif^0\leq b)=\sqrt{b}$, 
for every $\theta\in \Theta$
let 
$\uniform^\theta=(\uniform^\theta_t)_{t\in[0,1]}\colon [0,1]\times\Omega\to [0,1] $ satisfy for all  $t\in [0,1]$ that
$\uniform^\theta_t=t+(1-t)\unif^\theta$,
let $W^{d,\theta}\colon [0,1]\times \Omega \to\R^d$, $\theta\in\Theta$, 
$d\in \N$,
be independent standard 
Brownian motions with continuous sample paths,
assume for every $d\in\N$ that
$(W^{d,\theta})_{\theta\in\Theta}$ and
$(\unif^\theta)_{\theta\in\Theta}$ are independent,
and
let
$ 
  {\bf U}_{ n,m}^{d,\theta }=
({U}_{ n,m}^{d,\theta},{V}_{ n,m}^{d,\theta})
  \colon[0,1)\times\R^d\times\Omega\to\R^{1}\times\R^d
$,
$n,m\in\Z$, $\theta\in\Theta$, 
$d\in\N$,
satisfy
for all 
$d,
  n,m \in \N
$,
$ \theta \in \Theta $,
$ t\in [0,1)$,
$x \in \R^d$
that $
{\bf U}_{-1,m}^{\theta}(t,x)={\bf U}_{0,m}^{\theta}(t,x)=0$ and 
\begin{equation}  \begin{split}\label{eq:intro.MLP}
  &{\bf U}_{n,m}^{d,\theta}(t,x) 
  =  (g_d(x),0)+
  \sum_{i=1}^{m^n}\tfrac{g_d\bigl(x+W^{d,(\theta,0,-i)}_1-W^{d,(\theta,0,-i)}_t\bigr)-g_d(x)}{m^n}
  \left(
  1 ,
  \tfrac{ 
  W^{d,(\theta, 0, -i)}_{1}- W^{d,(\theta, 0, -i)}_{t}
  }{ 1 - t }
  \right)
  \\
  &+\sum_{\ell =0}^{n-1}\sum_{i=1}^{m^{n-\ell }}
  \tfrac{
\left((f_d\circ 
  {V}_{\ell ,m}^{d,(\theta,\ell ,i)})-\1_{\N}(\ell ) (f_d\circ {V}_{\ell -1,m}^{d,(\theta,-\ell ,i)})\right)
\!
  \left(\mathcal{R}^{(\theta, \ell ,i)}_t,
x+W_{\mathcal{R}^{(\theta, \ell ,i)}_t}^{d,(\theta,\ell ,i)}-W_t^{d,(\theta,\ell ,i)}\right)
  }
  {(m^{n-\ell })/\big(4(1-t)\big(\mathcal{R}^{(\theta, \ell ,i)}_t-t\big)\big)^{\frac{1}{2}}}
  \left(
  1 ,
  \tfrac{ 
  W_{\mathcal{R}^{(\theta, \ell ,i)}_t}^{d,(\theta,\ell ,i)}- W^{d,(\theta, \ell , i)}_{t}
  }{ \mathcal{R}^{(\theta, \ell ,i)}_t-t}
  \right).
\end{split}    \end{equation}
Then 
 for all $p\in[0,\infty)$, $n\in\N\cap(2p,\infty)$
it holds that
\begin{align}
\liminf_{d\to\infty}\frac{1}{d^{p}}
\left(
\E\!\left[
\left\lVert{\bf U}_{n,n}^{d,0}(0,0)-(u_d,\nabla_x u_d)(0,0)\right \rVert^2_{\R^{d+1}}\right]\right)^{1/2}=\infty.
\end{align}

\end{theorem}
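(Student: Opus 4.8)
The plan is to bound the error from below by the size of the coordinates $j\in\{2,\dots,d\}$ of the approximate gradient, to derive an \emph{exact} second–moment recursion for these coordinates, and then to show that the multilevel telescoping in \eqref{eq:intro.MLP} fails to damp the growth in the dimension, so that this second moment is of exact order $d^{n}$.

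\emph{Reduction.} Since $f_d$ ignores the first coordinate and $g_d(x)=\lvert x_1\rvert$ depends only on $x_1$, the solution of \eqref{eq:intro.PDE} is $u_d(t,x)=\E[\lvert x_1+W^{d,0}_1(1)-W^{d,0}_t(1)\rvert]$, which depends on $x$ only through $x_1$; in particular $(\nabla_x u_d)(0,0)=0$. Writing ${\bf U}^{d,\theta}_{n,m}=(U^{d,\theta}_{n,m},V^{d,\theta}_{n,m})$ and using $f_d(z)^2=\sum_{j=2}^{d}z_j^2$ it therefore suffices to prove, for $m=n$ fixed, that
\[
h^d_n(0,0):=\E\bigl[(f_d\circ V^{d,0}_{n,n})(0,0)^2\bigr]=\sum_{j=2}^{d}\E\bigl[(V^{d,0}_{n,n}(0,0))_j^2\bigr]\ \ge\ c_n\,(d-1)^{n}
\]
for some $c_n\in(0,\infty)$ and all large $d$, because $\E[\lVert{\bf U}^{d,0}_{n,n}(0,0)-(u_d,\nabla_xu_d)(0,0)\rVert^2_{\R^{d+1}}]\ge h^d_n(0,0)$.

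\emph{An exact second–moment recursion.} First I would show by induction on $k$ that $(f_d\circ V^{d,\theta}_{k,m})(t,x)$ depends on $x$ only through $x_1$; this follows from \eqref{eq:intro.MLP} because $g_d$ sees only the first coordinate while, in the equation for coordinate $j\ge2$ of $V^{d,\theta}_{k,m}$, every summand carries an independent centred Brownian increment in that coordinate. Consequently the weights $(W_{\mathcal{R}_t}-W_t)_j/(\mathcal{R}_t-t)$, $j\ge2$, in \eqref{eq:intro.MLP} are conditionally centred and the summands attached to distinct elements of $\Theta$ are independent, so all cross terms vanish and, writing $h^d_k(t,x_1):=\E[(f_d\circ V^{d,0}_{k,m})(t,x)^2]$ and $h^d_0\equiv0$, one obtains for $j\ge2$
\[
\E\bigl[(V^{d,0}_{k,m}(t,x))_j^2\bigr]=A_k(t,x_1)+\sum_{\ell=1}^{k-1}\frac{4(1-t)}{m^{k-\ell}}\,\E\Bigl[\bigl(f_d\circ V^{(1)}_{\ell,m}-f_d\circ V^{(2)}_{\ell-1,m}\bigr)^2\bigl(\mathcal{R}_t,\,x+W_{\mathcal{R}_t}-W_t\bigr)\Bigr],
\]
where $V^{(1)}_{\ell,m},V^{(2)}_{\ell-1,m}$ are independent copies of $V^{d,0}_{\ell,m},V^{d,0}_{\ell-1,m}$ and $A_k$ is a $d$–independent term with $0\le A_k\le m^{-k}$; multiplying by $d-1$ (exchangeability of coordinates $2,\dots,d$) yields the recursion for $h^d_k$.

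\emph{The core estimate: multilevel does not damp the growth.} By Jensen's inequality and independence, $\E[(f_d\circ V^{(1)}_{\ell,m})(f_d\circ V^{(2)}_{\ell-1,m})\mid\text{evaluation point}]\le\sqrt{h^d_\ell\,h^d_{\ell-1}}$ at that point, so each $\ell$–summand is $\ge\frac{4(1-t)}{m^{k-\ell}}\E[(\sqrt{h^d_\ell}-\sqrt{h^d_{\ell-1}})^2(\mathcal{R}_t,\dots)]$; since $h^d_\ell$ will turn out to be of order $(d-1)^\ell$, one power of $d$ \emph{larger} than $h^d_{\ell-1}$, this difference is $\gtrsim h^d_\ell$ for large $d$. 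Concretely I would prove by induction on $k$ the upper bound (U): $h^d_k(t,y)\le b_k(d-1)^k$ for all $(t,y)$, $d\ge2$; and the lower bound (L): for every $t_0\in[0,1)$ there are $a_{k,t_0}>0$, $D_{k,t_0}\in\N$ with $h^d_k(t,y)\ge a_{k,t_0}(d-1)^k$ for all $y\in\R$, $t\le t_0$, $d\ge D_{k,t_0}$. Both hold for $k=1$ by the elementary fact that, for $B$ with $\E[B]=0$, $\E[B^2]=1-t$, one has $c_0(1-t)\le\E[(\lvert y+B\rvert-\lvert y\rvert)^2]\le1-t$ uniformly in $y$, with $c_0:=\inf_{r\in\R}\E[(\lvert r+Z\rvert-\lvert r\rvert)^2]>0$ ($Z$ standard normal), which gives $h^d_1(t,y)=(d-1)\E[(\lvert y+B\rvert-\lvert y\rvert)^2]/(m(1-t))\in[(d-1)c_0/m,(d-1)/m]$. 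For the inductive step of (L) I would keep only the $\ell=k-1$ summand and intersect with the event $\{\unif^{\bullet}\le\tfrac12\}$, where $\unif^{\bullet}$ is the uniform variable producing the evaluation time $\mathcal{R}_t=t+(1-t)\unif^{\bullet}$; on this event the evaluation time is $\le\tfrac{1+t_0}{2}<1$, so the induction hypothesis (L) at level $k-1$ with $t_0$ replaced by $\tfrac{1+t_0}{2}$ applies there, and (U) for $h^d_{k-2}$ then gives $(\sqrt{h^d_{k-1}}-\sqrt{h^d_{k-2}})^2\ge\tfrac14 h^d_{k-1}$ on that event once $d$ is large; since $\P(\unif^{\bullet}\le\tfrac12)=2^{-1/2}$ this yields (L) at level $k$. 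Applying (L) with $k=n$, $t_0=0$, $m=n$ gives $h^d_n(0,0)\ge a_{n,0}(d-1)^n$; hence $(\E[\lVert{\bf U}^{d,0}_{n,n}(0,0)-(u_d,\nabla_xu_d)(0,0)\rVert^2])^{1/2}\ge a_{n,0}^{1/2}(d-1)^{n/2}$, and dividing by $d^p$ and letting $d\to\infty$ proves the theorem whenever $n>2p$.

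\emph{Expected main obstacle.} The delicate point is the behaviour near the terminal time $1$: the recursion evaluates $h^d_{k-1}$ at the random time $\mathcal{R}_t$, which can be arbitrarily close to $1$, and there $h^d_{k-1}$ is small (of order $(1-\mathcal{R}_t)^{k-2}(d-1)^{k-1}$) whereas $h^d_{k-2}$ need not be comparably small, so the pointwise inequality $(\sqrt{h^d_{k-1}}-\sqrt{h^d_{k-2}})^2\gtrsim h^d_{k-1}$ can fail; intersecting with the events $\{\unif^{\bullet}\le\tfrac12\}$ at each of the (at most $n$) recursion levels—all of fixed positive probability—keeps every evaluation time bounded away from $1$ and removes this difficulty. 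The remaining effort is the bookkeeping behind the exact recursion, i.e.\ verifying that all cross terms in the full expansion of \eqref{eq:intro.MLP} vanish, which rests entirely on the coordinate–decoupling property that $f_d\circ V^{d,\theta}_{k,m}$ depends on space only through the first coordinate.
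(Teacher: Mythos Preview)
Your proposal is correct and follows a genuinely different route from the paper. Both arguments reduce to bounding $\E[(f_d\circ V^{d,0}_{n,m})(0,0)^2]$ from below by a constant times $(d-1)^n$, and both need matching upper bounds at lower levels to control the multilevel cancellation; but the machinery differs.

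The paper does \emph{not} use your key structural observation that $f_d\circ V^{d,\theta}_{k,m}(t,x)$ depends on $x$ only through $x_1$. Instead it introduces an auxiliary family
\[
\underline{\varepsilon}_{k,m}(n)=\Bigl(\int_{0}^{1}\!\!\cdots\!\!\int_{t_{k-1}}^{1}\E\Bigl[\Bigl|(f\circ V^{0}_{n,m})(t_k,W^0_{t_k})\prod_{j=1}^{k}\tfrac{f(W^0_{t_j}-W^0_{t_{j-1}})}{\sqrt{\varrho(t_{j-1},t_j)}(t_j-t_{j-1})}\Bigr|^2\Bigr]\,dt_k\cdots dt_1\Bigr)^{1/2}
\]
and proves a \emph{diagonal} recursion $\underline{\varepsilon}_{k,m}(n)\ge (2m)^{-1/2}\underline{\varepsilon}_{k+1,m}(n-1)-\text{error}$, unrolled down to $\underline{\varepsilon}_{n-1,m}(1)$. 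Centering of summands is obtained via the reflection $W\mapsto -W$ (not via your coordinate decoupling), the variance lower bound uses a general lemma on i.i.d.\ random fields evaluated at a common random point, and the error terms are controlled by the Section~2 upper bounds for seminorm-type nonlinearities. The restriction away from the terminal time is done by integrating only over $[0,\tfrac{1}{17}]^{k}$, analogous to your intersection with $\{\unif^{\bullet}\le\tfrac12\}$.

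Your argument is more tailored to this example and correspondingly cleaner: the observation that all scalar weights in coordinates $j\ge2$ are measurable with respect to coordinate~$1$ and the time randomness lets you kill \emph{all} cross terms at once and run a one-index induction on $k$ with simultaneous bounds (U) and (L). The paper's argument is less direct here but its Section~2 upper bounds are formulated for any $f$ satisfying the seminorm inequalities $|f(v_1+v_2)|\le|f(v_1)|+|f(v_2)|$ and $|f(\lambda v)|\le|\lambda||f(v)|$, so that part is more general. One minor point: in your base case $c_0=\inf_{r}\E[(|r+Z|-|r|)^2]>0$ is correct (the expression is continuous, equals $1$ at $r=0$ and tends to $1$ as $|r|\to\infty$, and is strictly positive pointwise since $|r+Z|$ is non-constant), but you should state this explicitly rather than call it ``elementary''.
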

Theorem \ref{thm:intro} follows directly from Corollary \ref{c12} together with
uniqueness of the solution of the PDE \eqref{eq:intro.PDE}.
Theorem \ref{thm:intro} shows that the $L^2$-distance between
the solution of the PDE  \eqref{eq:intro.PDE} and the MLP approximations \eqref{eq:intro.MLP}
grows faster in the dimension than any polynomial.

\section{Upper bounds for MLP approximations with nonlinearities of seminorm-type}
In this section we consider nonlinearities which are of seminorm-type.
By this we mean functions $f\colon\R^d\to\R$ such that $|f|$
satisfies condition \eqref{p01}.
Under this condition we derive upper bounds for second moments of our
MLP approximations using repeated applications of the triangle-type inequality \eqref{p01}.
%
The following setting introduces MLP approximations.
\begin{setting}\label{s01}
Let $d\in\N$, 
$\Theta=\bigcup_{n\in\N}\Z^n$,
$f,g\in C(\R^d,\R)$,
let $\varrho \colon  [0,1]^2\to \R$ satisfy for all
$t\in[0,1)$, 
$s\in (t,1]$ that $\varrho (t,s)= \frac{1}{2\sqrt{1-t}\sqrt{s-t}}$,
let $(\Omega, \mathcal{F},\P,(\F_t)_{t\in[0,1]})$ be a filtered probability space satisfying the usual conditions,
let $\unif^\theta\colon \Omega\to (0,1] $, $\theta\in\Theta$, be i.i.d.\ random variables which satisfy for all $b\in [0,1] $ that 
$\P(\unif^0\leq b)=\int_{0}^{b}\varrho (0,s)\,ds$, 
for every $\theta\in \Theta$
let 
$\uniform^\theta=(\uniform^\theta_t)_{t\in[0,1]}\colon [0,1]\times\Omega\to [0,1] $ satisfy for all  $t\in [0,1]$ that
$\uniform^\theta_t=t+(1-t)\unif^\theta$,
let $W^\theta\colon [0,1]\times \Omega \to\R^d$, $\theta\in\Theta$, be independent standard 
$(\F_t)_{t\in[0,1]}$-Brownian motions with continuous sample paths,
assume that
$(W^\theta)_{\theta\in\Theta}$ and
$(\unif^\theta)_{\theta\in\Theta}$ are independent,
and
let
$ 
  {\bf U}_{ n,m}^{\theta }=
({U}_{ n,m}^{\theta},{V}_{ n,m}^{\theta})
  \colon[0,1)\times\R^d\times\Omega\to\R^{1}\times\R^d
$,
$n,m\in\Z$, $\theta\in\Theta$, satisfy
for all 
$
  n,m \in \N
$,
$ \theta \in \Theta $,
$ t\in [0,1)$,
$x \in \R^d$
that $
{\bf U}_{-1,m}^{\theta}(t,x)={\bf U}_{0,m}^{\theta}(t,x)=0$ and 
\begin{equation}  \begin{split}
  &{\bf U}_{n,m}^{\theta}(t,x) 
  =  (g(x),0)+
  \sum_{i=1}^{m^n}\tfrac{g\bigl(x+W^{(\theta,0,-i)}_1-W^{(\theta,0,-i)}_t\bigr)-g(x)}{m^n}
  \left(
  1 ,
  \tfrac{ 
  W^{(\theta, 0, -i)}_{1}- W^{(\theta, 0, -i)}_{t}
  }{ 1 - t }
  \right)
  \\
  &+\sum_{\ell =0}^{n-1}\sum_{i=1}^{m^{n-\ell }}
  \tfrac{
\left((f\circ 
  {V}_{\ell ,m}^{(\theta,\ell ,i)})-\1_{\N}(\ell ) (f\circ {V}_{\ell -1,m}^{(\theta,-\ell ,i)})\right)
\!
  \left(\mathcal{R}^{(\theta, \ell ,i)}_t,x+W_{\mathcal{R}^{(\theta, \ell ,i)}_t}^{(\theta,\ell ,i)}-W_t^{(\theta,\ell ,i)}\right)
  }
  {m^{n-\ell }\varrho(t,\mathcal{R}^{(\theta, \ell ,i)}_t)}
  \left(
  1 ,
  \tfrac{ 
  W_{\mathcal{R}^{(\theta, \ell ,i)}_t}^{(\theta,\ell ,i)}- W^{(\theta, \ell , i)}_{t}
  }{ \mathcal{R}^{(\theta, \ell ,i)}_t-t}
  \right).
\end{split}  \label{c10}   \end{equation}
\end{setting}
\begin{remark}[Densities of $\uniform^0_t$, $t\in[0,1)$]\label{d14}  Assume \cref{s01}. 
Then
the fact that
$\forall\,t \in [0,1]\colon \uniform_t^0=t+(1-t)\unif^0$,
the fact that
$\forall\, b\in [0,1] \colon \P(\unif^0\leq b)=\int_{0}^{b}\varrho (0,r)\,dr=\int_{0}^{b}
\frac{dr}{2\sqrt{r}}$, and
the substitution rule 
show for all $t\in [0,1)$, $h\in C([t,1],[0,\infty))$ that
\begin{align}\begin{split}
&
\E \!\left[h(\uniform_t^0)\right]
= \E \!\left[h (t+(1-t)\unif^0)\right]
=
\int_{0}^{1}
\frac{h (t+(1-t)r)}{2\sqrt{r}}\,dr=\int_{t}^{1} \frac{h(s)}{2\sqrt{\frac{s-t}{1-t}}}\frac{ds}{1-t}
= \int_{t}^{1}h(s)\varrho(t,s)\,ds. 
\end{split}\end{align}
\end{remark}
\begin{lemma}\label{c02}
It holds for all $t_0\in [0,1)$, $k\in\N$ that
\begin{align}
\int_{t_0}^1\int_{t_1}^{1}\ldots\int_{t_{k-1}}^{1}
\prod_{j=1}^{k}
\frac{\sqrt{1-t_{j-1}}}{\sqrt{t_j-t_{j-1}}}
dt_{k}dt_{k-1}\cdots dt_{1}\leq 2^k.
\end{align}
\end{lemma}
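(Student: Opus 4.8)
The plan is to prove the claim by induction on $k$, at each stage integrating out the outermost variable $t_1$. For $t_0\in[0,1)$ and $k\in\N$ set
\[
I_k(t_0):=\int_{t_0}^1\int_{t_1}^1\cdots\int_{t_{k-1}}^1\prod_{j=1}^k\frac{\sqrt{1-t_{j-1}}}{\sqrt{t_j-t_{j-1}}}\,dt_k\,dt_{k-1}\cdots dt_1,
\]
and additionally $I_0(t_0):=1$, so that the assertion of the lemma is that $I_k(t_0)\le 2^k$ for all $t_0\in[0,1)$ and $k\in\N$. Since the integrand is nonnegative, it is in fact more convenient to establish the slightly stronger bound $I_k(t_0)\le 2^k(1-t_0)^k$ for all $t_0\in[0,1)$ and $k\in\{0\}\cup\N$; the lemma then follows at once from $1-t_0\in(0,1]$.

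For the base case $k=0$ there is nothing to prove since $I_0\equiv1$. For the inductive step, fix $k\in\N$ and assume $I_{k-1}(s)\le 2^{k-1}(1-s)^{k-1}$ for all $s\in[0,1)$. Relabelling the inner integration variables $t_2,t_3,\ldots,t_k$ as $s_1,s_2,\ldots,s_{k-1}$ identifies the inner $(k-1)$-fold integral in the definition of $I_k(t_0)$ with $I_{k-1}(t_1)$, so by Tonelli's theorem
\[
I_k(t_0)=\int_{t_0}^1\frac{\sqrt{1-t_0}}{\sqrt{t_1-t_0}}\,I_{k-1}(t_1)\,dt_1
\le 2^{k-1}\sqrt{1-t_0}\int_{t_0}^1\frac{(1-t_1)^{k-1}}{\sqrt{t_1-t_0}}\,dt_1.
\]
Using the crude bound $1-t_1\le 1-t_0$, valid for all $t_1\in[t_0,1]$, together with $\int_{t_0}^1(t_1-t_0)^{-1/2}\,dt_1=2\sqrt{1-t_0}$, the right-hand side is at most
\[
2^{k-1}\sqrt{1-t_0}\,(1-t_0)^{k-1}\cdot 2\sqrt{1-t_0}=2^k(1-t_0)^k,
\]
which closes the induction.

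I do not expect any genuine obstacle; the two points that need a modicum of care are (i) the measurability and nonnegativity of the integrand, which make the use of Tonelli's theorem and the recursive identity above legitimate (the integrand is continuous and positive on the open set $\{t_0<t_1<\cdots<t_k<1\}$), and (ii) the relabelling of variables that exhibits the inner integral as $I_{k-1}(t_1)$. If one preferred a sharper intermediate estimate one could instead substitute $t_1=t_0+(1-t_0)v$ and recognise the exact Beta integral $\int_0^1 v^{-1/2}(1-v)^{k-1}\,dv=B(\tfrac12,k)$, which is bounded by $2$ uniformly in $k$; but the elementary bound $1-t_1\le 1-t_0$ already suffices and keeps the argument entirely self-contained.
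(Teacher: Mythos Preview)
Your proof is correct and follows essentially the same inductive strategy as the paper, resting on the same elementary computation $\int_t^1(s-t)^{-1/2}\,ds=2\sqrt{1-t}$. The only difference is cosmetic: the paper peels off the \emph{innermost} variable $t_k$ first, which lets it use $2(1-t_{k-1})\le 2$ directly without strengthening the induction hypothesis, whereas you peel off the \emph{outermost} variable $t_1$ and therefore carry the extra factor $(1-t_0)^k$ through the induction; your route yields the marginally sharper bound $I_k(t_0)\le 2^k(1-t_0)^k$ at the cost of one more line.
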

\begin{proof}[Proof of \cref{c02}]
First, observe for all $t\in[0,1)$ that
\begin{align}
\int_{t}^{1}\frac{\sqrt{1-t}}{\sqrt{s-t}}\,ds= 2\sqrt{1-t}\sqrt{s-t}\bigr|_{s=t}^1=2(1-t)\leq 2 .\label{t23} 
\end{align}
This shows 
 for all $k\in\N$, $t_0\in [0,1)$  that
\begin{align}\begin{split}
&
\int_{t_0}^1\int_{t_1}^{1}\ldots\int_{t_{k-1}}^{1}\int_{t_k}^{1}
\prod_{j=1}^{k+1}
\frac{\sqrt{1-t_{j-1}}}{\sqrt{t_j-t_{j-1}}}\,
dt_{k+1}dt_{k}dt_{k-1}\cdots dt_{1}\\
&
= 
\int_{t_0}^1\int_{t_1}^{1}\ldots\int_{t_{k-1}}^{1}\left[\int_{t_k}^{1}
\frac{\sqrt{1-t_{k}}}{\sqrt{t_{k+1}-t_{k}}}\,
dt_{k+1}\right]
\prod_{j=1}^{k}
\frac{\sqrt{1-t_{j-1}}}{\sqrt{t_j-t_{j-1}}}\,
dt_{k}dt_{k-1}\cdots dt_{1}\\
&
\leq 
\int_{t_0}^1\int_{t_1}^{1}\ldots\int_{t_{k-1}}^{1}2
\prod_{j=1}^{k}
\frac{\sqrt{1-t_{j-1}}}{\sqrt{t_j-t_{j-1}}}\,
dt_{k}dt_{k-1}\cdots dt_{1}.
\end{split}\end{align}
This, \eqref{t23}, and induction complete the proof of \cref{c02}.
\end{proof}

Under the triangle-type inequality \eqref{p01} below we now derive
upper bounds for second moments.
\begin{proposition}\label{m01}
Assume \cref{s01},
let $\lVert\cdot\rVert\colon\R^d\to[0,\infty)$ be the standard norm, let
$\gamma,f,g\in C(\R^d,\R)$ 
satisfy
for all $\lambda\in\R $, 
$v_1,v_2\in \R^d$ that 
\begin{align}\label{c01}
\lvert g(v_1)-g(v_2)\rvert
\leq  \gamma (v_1-v_2) ,
\quad \gamma(v_1+v_2)\leq \gamma (v_1)+\gamma(v_2),\quad \gamma(\lambda v_1)\leq  \lvert \lambda \rvert\gamma (v_1),
\end{align}
\begin{align}
\lvert f(v_1+v_2)\rvert\leq \lvert  f(v_1)\rvert+\lvert f(v_2)\rvert, \quad\text{and}\quad  \lvert f(\lambda v_1)\rvert\leq \lvert\lambda f(v_1)\rvert.\label{p01}
\end{align}
Then \begin{enumerate}[i)]
\item \label{c17}it holds for all 
$n,m\in\N$, $\theta\in\Theta$ that
$\mathbf{U}_{n,m}^\theta$ is measurable,
\item\label{c13}  it holds for all
$n,m\in\N$,
$t\in[0,1)$, $\xi\in\R^d$  that
\begin{align}\begin{split}
&
\left(\E\!\left[
\left\lvert
\left(f
\circ 
V_{n,m}^0\right)(t,\xi)
\right\rvert^2\right]
\right)^{1/2}\leq \left(\E\!\left[\lvert \gamma( W^0_1) \rvert^4\right]\right)^{1/4}
 \max\left\{\left(\E\!\left[\lvert f( W^0_1)\rvert^4\right]\right)^{n/4},1\right\}6^{n-1}<\infty,
\end{split}\end{align}
and
\item \label{c14}it holds for all 
$n,m\in\N$ that
\begin{align}\begin{split}
&
\left(
\E\!\left[
\left\lvert{U}_{n,m}^{0}(0,0)\right \rvert^2\right]+
\E\!\left[
\left\lVert{V}_{n,m}^{0}(0,0)\right \rVert^2\right]\right)^{1/2}
\\
&
\leq \lvert g(0)\rvert+
\left(\E\!\left[\lvert \gamma( W^0_1) \rvert^4\right]\right)^{1/4}
 \max\left\{\left(\E\!\left[\lvert f( W^0_1)\rvert^4\right]\right)^{n/4},1\right\}6^n\sqrt{d}<\infty
.
\end{split}\end{align}
\end{enumerate}
\end{proposition}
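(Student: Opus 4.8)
The plan is to treat the three items in order: item~\ref{c17} by a measurability induction, item~\ref{c13} by deriving and then solving a recursive second-moment inequality, and item~\ref{c14} by running the computation behind~\ref{c13} at $(t,x)=(0,0)$ while keeping the first component. For~\ref{c17} I would induct on $n$; the cases $n\in\{-1,0\}$ are trivial since $\mathbf U^\theta_{-1,m}=\mathbf U^\theta_{0,m}=0$. In the inductive step the right-hand side of~\eqref{c10} is a finite sum of products of: the scalar factors $\tfrac1{1-t}$, $\tfrac1{\mathcal R^\sigma_t-t}$, $\tfrac1{\varrho(t,\mathcal R^\sigma_t)}$, which are well defined and measurable because $t<1$ and $\unif^\sigma\in(0,1]$ give $t<\mathcal R^\sigma_t=t+(1-t)\unif^\sigma\le1$; compositions of the continuous maps $g,f$ with measurable maps; and evaluations of $W^\sigma$ and of $V^\sigma_{\ell,m}$ ($\ell<n$, measurable by the induction hypothesis) at $(\mathcal R^\sigma_t,\,x+W^\sigma_{\mathcal R^\sigma_t}-W^\sigma_t)$. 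Since each $W^\sigma$ has continuous sample paths, $(t,\omega)\mapsto W^\sigma_t(\omega)$ is jointly measurable, hence so is $(t,x,\omega)\mapsto(\mathcal R^\sigma_t(\omega),\,x+W^\sigma_{\mathcal R^\sigma_t(\omega)}(\omega)-W^\sigma_t(\omega))$; composing with the measurable $V^\sigma_{\ell,m}$ and the continuous $f$, and forming finite sums and products, preserves measurability. (On the $\P$-null event $\{\unif^\sigma=1\}$, where $V^\sigma_{\ell,m}(\mathcal R^\sigma_t,\cdot)$ would be evaluated outside its domain, one sets the corresponding term to $0$, which affects neither measurability nor the $L^2$-estimates below.)

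For~\ref{c13}, first note that~\eqref{p01} forces $f(0)=0$ and, with~\eqref{c01}, that $0\le\gamma$, $\gamma(0)=0$, $|f(x)|\le\|x\|\sup_{\|u\|=1}|f(u)|$ and $\gamma(x)\le\|x\|\sup_{\|u\|=1}\gamma(u)$ for all $x\in\R^d$; continuity of $f,\gamma$ makes these suprema finite, so $f,\gamma$ grow at most linearly and in particular $A:=(\E[\gamma(W^0_1)^4])^{1/4}<\infty$, $B:=(\E[|f(W^0_1)|^4])^{1/4}<\infty$. Set, for $\ell\ge-1$, $\beta_\ell:=\sup_{\theta\in\Theta,\,t\in[0,1),\,\xi\in\R^d}(\E[|(f\circ V^\theta_{\ell,m})(t,\xi)|^2])^{1/2}$; by exchangeability of $(W^\theta)_{\theta\in\Theta}$ and $(\unif^\theta)_{\theta\in\Theta}$ this agrees with the same supremum for a single fixed $\theta$, and $\beta_{-1}=\beta_0=0$ since $V_{-1,m}=V_{0,m}\equiv0$ and $f(0)=0$. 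Applying~\eqref{p01} termwise to the second component of~\eqref{c10} (via $|f(\sum_jv_j)|\le\sum_j|f(v_j)|$, $|f(\lambda v)|\le|\lambda||f(v)|$ and $\tfrac1{\varrho(t,s)(s-t)}=\tfrac{2\sqrt{1-t}}{\sqrt{s-t}}$) and bounding $|g(x+\cdot)-g(x)|\le\gamma(\cdot)$ by~\eqref{c01}, one gets the pointwise estimate: $|(f\circ V^0_{n,m})(t,x)|$ is at most $\sum_{i=1}^{m^n}\tfrac{\gamma(\Delta_i)\,|f(\Delta_i)|}{m^n(1-t)}$ plus $\sum_{\ell=0}^{n-1}\sum_{i=1}^{m^{n-\ell}}\tfrac{2\sqrt{1-t}}{m^{n-\ell}\sqrt{\mathcal R_{\ell,i}-t}}\bigl|(f\circ V^{(0,\ell,i)}_{\ell,m})-\1_{\N}(\ell)(f\circ V^{(0,-\ell,i)}_{\ell-1,m})\bigr|(\mathcal R_{\ell,i},x+\Delta_{\ell,i})\,|f(\Delta_{\ell,i})|$, where $\Delta_i=W^{(0,0,-i)}_1-W^{(0,0,-i)}_t$, $\mathcal R_{\ell,i}=\mathcal R^{(0,\ell,i)}_t$ and $\Delta_{\ell,i}=W^{(0,\ell,i)}_{\mathcal R_{\ell,i}}-W^{(0,\ell,i)}_t$. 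Taking $L^2$-norms: Minkowski plus Cauchy--Schwarz plus $W^\sigma_r-W^\sigma_t\overset{d}{=}\sqrt{r-t}\,W^0_1$ with~\eqref{p01},~\eqref{c01} bound the first sum by $m^n\tfrac{(\sqrt{1-t}A)(\sqrt{1-t}B)}{m^n(1-t)}=AB$; and for each summand of the $\ell$-th inner sum, conditioning first on $\mathcal R_{\ell,i}$ and then on $\Delta_{\ell,i}$ works because $V^{(0,\ell,i)}_{\ell,m}$ and $V^{(0,-\ell,i)}_{\ell-1,m}$ depend only on randomness indexed by strict extensions of $(0,\ell,i)$ resp.\ $(0,-\ell,i)$ and are therefore independent of $\unif^{(0,\ell,i)}$ and $W^{(0,\ell,i)}$, hence of $(\mathcal R_{\ell,i},\Delta_{\ell,i})$; using $\Delta_{\ell,i}\,|\,\{\mathcal R_{\ell,i}=s\}\sim N(0,(s-t)I_d)$ and Minkowski for the difference, the conditional second moment is $\le(\beta_\ell+\beta_{\ell-1})^2(s-t)B^2$, so the summand has $L^2$-norm $\le\tfrac{2\sqrt{1-t}}{m^{n-\ell}}(\beta_\ell+\beta_{\ell-1})B\le\tfrac{2B(\beta_\ell+\beta_{\ell-1})}{m^{n-\ell}}$. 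Summing over $i$ and $\ell$ yields $\beta_n\le AB+2B\sum_{\ell=1}^{n-1}(\beta_\ell+\beta_{\ell-1})$.

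From this recursion, $\beta_n\le A\max\{B^n,1\}6^{n-1}<\infty$ follows by induction on $n$: for $n=1$ it reads $\beta_1\le AB\le A\max\{B,1\}$, and for $n\ge2$, using $\beta_0=0$, the inequalities $\max\{B^\ell,1\}\le\max\{B^n,1\}$, $B\max\{B^{n-1},1\}\le\max\{B^n,1\}$, $B\le\max\{B^n,1\}$ (valid as $n\ge1$, $1\le\ell\le n-1$), $\sum_{\ell=1}^{n-1}6^{\ell-1}\le\tfrac{6^{n-1}}5$, and the induction hypothesis, one gets $\beta_n\le AB+4B\sum_{\ell=1}^{n-1}\beta_\ell\le A\max\{B^n,1\}\bigl(1+\tfrac45 6^{n-1}\bigr)\le A\max\{B^n,1\}6^{n-1}$, the last step since $6^{n-1}\ge5$; this is~\ref{c13}. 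For~\ref{c14} I would rerun the argument at $(t,x)=(0,0)$ keeping the first component and the $\R^{d+1}$-norm: with $\|(1,v)\|^2_{\R^{d+1}}=1+\|v\|^2$ and $\|(1,v/s)\|^2_{\R^{d+1}}=1+\|v\|^2/s^2$, Minkowski in $L^2(\Omega;\R^{d+1})$ applied to~\eqref{c10} at $(0,0)$ gives $(\E[|U^0_{n,m}(0,0)|^2]+\E[\|V^0_{n,m}(0,0)\|^2])^{1/2}\le|g(0)|+(\mathrm{terminal})+(\mathrm{levels})$, where the terminal term is $\le A\sqrt{3d}$ by $|g(x+v)-g(x)|\le\gamma(v)$, Cauchy--Schwarz and $(\E[(1+\|W^0_1\|^2)^2])^{1/4}=(d^2+4d+1)^{1/4}\le(d+2)^{1/2}\le(3d)^{1/2}$, and, conditioning on $\mathcal R_{\ell,i}=\unif^{(0,\ell,i)}=s$ and $\Delta_{\ell,i}\sim N(0,sI_d)$ and using $\E[\|\Delta_{\ell,i}\|^2\,|\,\mathcal R_{\ell,i}=s]=sd$ and $\E[\mathcal R_{\ell,i}]\le1$, the $\ell$-th level contribution is $\le\tfrac{2\sqrt2(\beta_\ell+\beta_{\ell-1})\sqrt d}{m^{n-\ell}}$ per summand, hence $\le4\sqrt2\sqrt d\sum_{\ell=1}^{n-1}\beta_\ell$; inserting~\ref{c13} and using $\sqrt3+\tfrac{4\sqrt2}5 6^{n-1}\le6^n$ gives $(\E[|U^0_{n,m}(0,0)|^2]+\E[\|V^0_{n,m}(0,0)\|^2])^{1/2}\le|g(0)|+A\max\{B^n,1\}6^n\sqrt d<\infty$.

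The delicate point is the recursion in~\ref{c13}: one must apply~\eqref{p01} to the random \emph{vectors} in~\eqref{c10} rather than to their norms, so as to factor out the $|f(\Delta)|$ terms; one must correctly identify that the MLP sub-trees $V^{(0,\ell,i)}_{\ell,m}$, $V^{(0,-\ell,i)}_{\ell-1,m}$ are independent of the random time $\mathcal R_{\ell,i}$ and of the Brownian increment $\Delta_{\ell,i}$ at which they are evaluated, so that the conditioning is legitimate and never requires more than second moments of $f\circ V_{\ell,m}$; and one must run the final induction so that the constant emerges as $6^{n-1}$, which is precisely why the base case $n=1$ must be treated separately from the step $n\ge2$.
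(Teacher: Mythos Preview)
Your proof is correct, and for item~\ref{c13} it is genuinely simpler than the paper's. The paper does not work with your quantity $\beta_n=\sup_{t,\xi}\|(f\circ V^0_{n,m})(t,\xi)\|_{L^2}$; instead it introduces the iterated-integral quantities
\[
\epsUp_{N,m}(n)=\sup_{k,\,t_0,\,\xi}\Bigl(\int_{t_0}^1\!\cdots\!\int_{t_{k-1}}^1\E\Bigl[\Bigl|(f\circ V^0_{n,m})(t_k,\xi+W^0_{t_k}-W^0_{t_0})\prod_{j=1}^k\tfrac{f(W^0_{t_j}-W^0_{t_{j-1}})}{\sqrt{\varrho(t_{j-1},t_j)}(t_j-t_{j-1})}\Bigr|^2\Bigr]dt_k\cdots dt_1\Bigr)^{1/2},
\]
absorbs the new $|f(\Delta_{\ell,i})|$ factor into the product (raising $k$ to $k+1$) rather than bounding it separately, and derives the recursion $\epsUp_{N,m}(n)\le c_2\max\{c_1^N,1\}2^{N-1}+2\sum_{\ell=1}^{n-1}\epsUp_{N,m}(\ell)$, with the $2^{N-1}$ coming from \cref{c02}. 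Your route avoids the iterated integrals entirely by bounding $\E[|f(\Delta_{\ell,i})|^2\mid\mathcal R_{\ell,i}=s]\le(s-t)B^2$ immediately, which yields the cleaner recursion $\beta_n\le AB+4B\sum_{\ell=1}^{n-1}\beta_\ell$; both solve to the same $6^{n-1}$. The paper's more elaborate machinery pays off later in Section~3, where the analogous lower-bound quantities $\epsDown_{k,m}(n)$ really do require tracking the $k$-fold products, but for the upper bound in \cref{m01} your direct argument is shorter. Items~\ref{c17} and~\ref{c14} are handled essentially the same way in both proofs.
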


\begin{proof}[Proof of \cref{m01}]
First, \cite[Lemma 3.2]{hutzenthaler2021overcoming} proves \eqref{c17}.
Throughout the rest of this proof
let $c_1,c_2\in [0,\infty]$ satisfy that 
\begin{align}\label{t15}
c_1=\left(\E\!\left[\lvert f( W^0_1) \rvert^4\right]\right)^{1/4}\quad\text{and}\quad
c_2=  \left(\E\!\left[\lvert \gamma( W^0_1) \rvert^4\right]\right)^{1/4}
\end{align}
and
for every $N,m\in\N $  let $\epsUp_{N,m}\colon \{0,1,\ldots,N-1\}\to[0,\infty]$ satisfy for all $n\in \{0,1,\ldots,N-1\} $ that
\begin{align}\small\begin{split}
&
\epsUp_{N,m}(n)=\sup_{\substack{k\in\N\colon k+n\leq N}}\sup_{t_0\in[0,1)}
\sup_{\xi\in\R^d}\\
&
\left(\int_{t_0}^1\int_{t_1}^{1}\ldots\int_{t_{k-1}}^{1}
\E\!\left[
\left\lvert
\left(f
\circ 
V_{n,m}^0\right)(t_k,\xi + W^0_{t_k}-W_{t_0}^0)
\prod_{j=1}^{k}
\tfrac{f(W_{t_j}^0-W^0_{t_{j-1}})}{\sqrt{\varrho(t_{j-1},t_j)}(t_{j}-t_{j-1})}
\right\rvert^2\right]
dt_{k}dt_{k-1}\cdots dt_{1}
\right)^{1/2}.
\end{split}\label{c06}\end{align}
Observe that
\eqref{c01}, 
\eqref{p01},
the fact that $\E\! \left[
\lVert W_1^0\rVert^4\right]<\infty$, and the fact that $f,\gamma\in C(\R^d,\R)$
 show for all $h\in \{f,\gamma\}$ that
\begin{align}
\left(
\E\! \left[
\left\lvert h(W_1^0)\right\rvert^4\right]\right)^{1/4}\leq
\left(
\E\! \left[
\left\lVert W_1^0\right\rVert^4\right]\right)^{1/4}
 \sup_{x\in\R^d\setminus\{0\}}
\tfrac{\lvert h(x)\rvert}{\lVert x\rVert}
\leq 
\left(
\E\! \left[
\left\lVert W_1^0\right\rVert^4\right]\right)^{1/4}
\sup_{x\in\R^d\colon \lVert x\rVert=1}
\lvert h(x)\rvert<\infty.\label{t18}
\end{align}%
This implies that $\max\{c_1,c_2\}<\infty$.
Next, \eqref{c01}, \eqref{p01}, the scaling properties of Brownian motions, and \eqref{t15}  imply for all $t\in[0,1)$, $s\in (t,1]$ that
\begin{align}\begin{split}
&
\left(
\E\!\left[
\left \lvert \gamma(W_s^0-W_t^0)\right\rvert^4\right]\right)^{1/4}\leq 
\sqrt{\lvert s-t\rvert}
\left(
\E\!\left[
\left \lvert \gamma\!\left(\tfrac{W_s^0-W_t^0}{\sqrt{\lvert s-t\rvert}}\right)\right\rvert^4\right]\right)^{1/4}
= c_2\sqrt{\lvert s-t\rvert}
\end{split}\label{c01c}\end{align}
and
\begin{align}\begin{split}
&
\left(
\E\!\left[
\left \lvert f(W_s^0-W_t^0)\right\rvert^4\right]\right)^{1/4}\leq 
\sqrt{\lvert s-t\rvert}
\left(
\E\!\left[
\left \lvert f\!\left(\tfrac{W_s^0-W_t^0}{\sqrt{\lvert s-t\rvert}}\right)\right\rvert^4\right]\right)^{1/4}
= c_1\sqrt{\lvert s-t\rvert}.
\end{split}\label{c01b}\end{align}
This, H\"older's inequality, the fact that Brownian motions have independent increments,   the fact that
$\forall\, t\in[0,1),s\in (t,1]\colon\varrho (t,s)(s-t)=  \frac{s-t}{2\sqrt{1-t}\sqrt{s-t}} =\frac{\sqrt{s-t}}{2\sqrt{1-t}}$, and
\eqref{t15}
prove that for all 
$\eta\in\Theta$,
$k\in\N_0$, 
$ (t_j)_{j\in[0,k]\cap\Z}\in [0,1)^{k+1} $
 with $\forall\, j\in[0,k)\colon t_j<t_{j+1}$ it holds that
\begin{align}\begin{split}
&
\left(\E\!\left[
\left \lvert \gamma\left( W^{\eta}_1-W^{\eta}_{t_k}\right)
 \tfrac{ f(
  W^{\eta}_{1}- W^{\eta}_{t_k})}{ 1 - t_k }
\prod_{j=1}^{k}
 \tfrac{f(W_{t_j}^0-W^0_{t_{j-1}})}{\sqrt{\varrho(t_{j-1},t_j)}(t_{j}-t_{j-1})}
\right \rvert^2\right]\right)^{1/2}
\\
&\leq
 \left(
\E\!\left[\left\lvert\gamma\left(
W_1^\eta-W_{t_k}^\eta \right)\right\rvert^4\right]\right)^{1/4}
\left(
\E\!\left[\left \lvert  
 \tfrac{f(W_1^\eta-W_{t_k}^\eta)}{1-t_k}
 \prod_{j=1}^{k}
 \tfrac{f(
W_{t_j}^0-W^0_{t_{j-1}})}{\sqrt{\varrho(t_{j-1},t_j)}(t_{j}-t_{j-1})
}
\right \rvert ^4\right]\right)^{1/4}\\
&=  \left(
\E\!\left[\left\lvert\gamma\left(
W_1^0-W_{t_k}^0 \right)\right\rvert^4\right]\right)^{1/4}
\left(
\E\!\left[ \left\lvert
 \tfrac{f(W_1^0-W_{t_k}^0)}{1-t_k}
\right\rvert^4\right]\right)^{1/4}
 \prod_{j=1}^{k}
 \tfrac{\left(
\E\!\left[\left \lvert  f(
W_{t_j}^0-W^0_{t_{j-1}})\right \rvert ^4\right]\right)^{1/4}}{\sqrt{\varrho(t_{j-1},t_j)}(t_{j}-t_{j-1})
}
\\
&\leq c_2 \sqrt{1-t_{k}} \tfrac{c_1\sqrt{1-t_k}}{1-t_k}
\prod_{j=1}^{k}
 \tfrac{c_1\sqrt{t_j-t_{j-1}}}{\sqrt{\varrho(t_{j-1},t_j)}(t_{j}-t_{j-1})
}= c_2 c_1^{k+1}
\prod_{j=1}^{k}
 \tfrac{1}{\sqrt{\varrho(t_{j-1},t_j)}\sqrt{t_j-t_{j-1}}}\\
&=  c_2 c_1^{k+1}2^{k/2}
\left[
\prod_{j=1}^{k}
 \tfrac{\sqrt{1-t_{j-1}}}{\sqrt{t_j-t_{j-1}}}\right]^{1/2}.
\end{split}\label{t25}
\end{align}
Next, 
 independence and distributional properties of MLP approximations (see \cite[Lemma~3.2]{hutzenthaler2021overcoming}), 
the fact that Brownian motions have independent increments,
the disintegration theorem (see, e.g., \cite[Lemma 2.2]{hutzenthaler2020overcoming}),
and
the fact that
$\forall\,t\in [0,1), h\in C([t,1],[0,\infty))\colon \E \!\left[h(\uniform_t^0)\right]=\int_{t}^{1}h(s)\varrho(t,s)\,ds  $
(cf. 
\cref{d14}) show that for all $n,m,i\in\N$,
$\ell\in [0,n-1]\cap\Z$,
$\iota\in\{0,1\}$, $\lambda\in \{-\ell,\ell\} $,
$k\in\N_0$, 
$\xi\in\R^d$,
$ (t_j)_{j\in[0,k]\cap\Z}\in [0,1)^{k+1} $
 with $\forall\, j\in[0,k)\colon t_j<t_{j+1}$ it holds that 
\begin{align}
&\E\!\left[\left\lvert
   \tfrac{
\bigl( f\circ
  {V}_{\ell-\iota ,m}^{(0,\lambda  ,i)}\bigr)
  \bigl(\mathcal{R}^{(0, \ell ,i)}_{t_k},\xi+W^0_{t_k}-W^0_{t_0}+W_{\mathcal{R}^{(0, \ell ,i)}_{t_k}}^{(0,\ell ,i)}-W_{t_k}^{(0,\ell ,i)}\bigr)
  }
  {\varrho(t_k,\mathcal{R}^{(0, \ell ,i)}_{t_k})}
   \tfrac{ f\bigl(
  W_{\mathcal{R}^{(0, \ell ,i)}_{t_k}}^{(0,\ell ,i)}- W^{(0, \ell , i)}_{t_k}\bigr)
  }{ \mathcal{R}^{(0, \ell ,i)}_{t_k}-t_k}
\prod_{j=1}^{k}
 \tfrac{f(
W_{t_j}^0-W^0_{t_{j-1}})}{\sqrt{\varrho(t_{j-1},t_j)}(t_{j}-t_{j-1})
}
\right\rvert^2
\right]\nonumber\\
&=\E\!\left[\left\lvert
   \tfrac{
\bigl( f\circ
  {V}_{\ell-\iota ,m}^{0}\bigr)
  \bigl(\mathcal{R}^{0}_{t_k},\xi+W^0_{t_k}-W^0_{t_0}+W_{\mathcal{R}^{0}_{t_k}}^{0}-W_{t_k}^{0}\bigr)
  }
  {\varrho(t_k,\mathcal{R}^{0}_{t_k})}
   \tfrac{ f\bigl(
  W_{\mathcal{R}^{0}_{t_k}}^{0}- W^{0}_{t_k}\bigr)
  }{ \mathcal{R}^{0}_{t_k}-t_k}
\prod_{j=1}^{k}
 \tfrac{f(
W_{t_j}^0-W^0_{t_{j-1}})}{\sqrt{\varrho(t_{j-1},t_j)}(t_{j}-t_{j-1})
}
\right\rvert^2
\right]\nonumber\\
&= 
\int_{t_k}^{1}\E\!\left[\left \lvert
   \tfrac{\bigl(f\circ  {V}_{\ell -\iota,m}^{0}\bigr)
  \bigl(t_{k+1},\xi+W^0_{t_k}-W^0_{t_0}+W_{t_{k+1}}^{0}-W_{t_k}^{0}\bigr)
  }
  {\varrho(t_k,t_{k+1})}
   \tfrac{ f\bigl(
  W_{t_{k+1}}^{0}- W^{0}_{t_k}\bigr)
  }{ t_{k+1}-t_k}
 \prod_{j=1}^{k}
 \tfrac{f\!\left(
W_{t_j}^{0}-W^{0}_{t_{j-1}}\right)}{\sqrt{\varrho(t_{j-1},t_j)}(t_{j}-t_{j-1})
}\right \rvert^2\right]\nonumber\\
&\qquad\qquad\qquad\qquad\qquad\qquad\qquad\qquad\qquad\qquad\qquad\qquad\qquad\qquad\qquad\qquad\qquad\qquad
\varrho(t_k,t_{k+1})\,
dt_{k+1}
\nonumber\\
&= 
\int_{t_k}^{1}\E\!\left[\left \lvert
(f\circ {V}_{\ell-\iota ,m}^{0})\!
  \left(t_{k+1},\xi+W_{t_{k+1}}^{0}-W^0_{t_0}\right)
 \prod_{j=1}^{k+1}
 \tfrac{f(
W_{t_j}^0-W^0_{t_{j-1}})}{\sqrt{\varrho(t_{j-1},t_j)}(t_{j}-t_{j-1})
}
\right \rvert^2\right]
dt_{k+1}
.\label{c04}
\end{align}
Next, 
\eqref{c10},
 \eqref{p01}, 
\eqref{c01},
 and the triangle inequality prove for all
$n,m\in\N$, $t\in [0,1) $, $x\in \R^d$ that $f(0)=0$ and
\begin{equation}  \begin{split}
  &\left\lvert(f\circ {V}_{n,m}^{0})(t,x) \right\rvert
  \leq  
  \sum_{i=1}^{m^n}\left\lvert f\left( \tfrac{g\bigl(x+W^{(0,0,-i)}_1-W^{(0,0,-i)}_t\bigr)-g(x)}{m^n}  
   \tfrac{ 
  W^{(0, 0, -i)}_{1}- W^{(0, 0, -i)}_{t}
  }{ 1 - t }\right)\right\rvert
  \\
  &\quad +\sum_{\ell=0}^{n-1}\sum_{i=1}^{m^{n-\ell }}\left\lvert f\left(
   \tfrac{
\bigl[\bigl(f\circ
  {V}_{\ell ,m}^{(0,\ell ,i)}\bigr)-\1_{\N}(\ell )\bigl(f\circ {V}_{\ell -1,m}^{(0,-\ell ,i)}\bigr)\bigr]
  \bigl(\mathcal{R}^{(0, \ell ,i)}_t,x+W_{\mathcal{R}^{(0, \ell ,i)}_t}^{(0,\ell ,i)}-W_t^{(0,\ell ,i)}\bigr)
  }
  {m^{n-\ell }\varrho(t,\mathcal{R}^{(0, \ell ,i)}_t)}
   \tfrac{ 
  W_{\mathcal{R}^{(0, \ell ,i)}_t}^{(0,\ell ,i)}- W^{(0, \ell , i)}_{t}
  }{ \mathcal{R}^{(0, \ell ,i)}_t-t}\right)\right\rvert\\
&\leq 
  \sum_{i=1}^{m^n} \tfrac{ \gamma\left( W^{(0,0,-i)}_1-W^{(0,0,-i)}_t\right)}{m^n}
\left \lvert  
   \tfrac{ f\!\left(
  W^{(0, 0, -i)}_{1}- W^{(0, 0, -i)}_{t}\right)
  }{ 1 - t }\right\rvert
  \\
  &\quad +\sum_{\ell=1}^{n-1}\sum_{i=1}^{m^{n-\ell }}
   \tfrac{\left\lvert
\left[ f\circ
  {V}_{\ell ,m}^{(0,\ell ,i)}- f\circ {V}_{\ell -1,m}^{(0,-\ell ,i)}\right]\!
  \bigl(\mathcal{R}^{(0, \ell ,i)}_t,x+W_{\mathcal{R}^{(0, \ell ,i)}_t}^{(0,\ell ,i)}-W_t^{(0,\ell ,i)}\bigr)\right\rvert
  }
  {m^{n-\ell }\varrho(t,\mathcal{R}^{(0, \ell ,i)}_t)}
   \tfrac{ \left\lvert f\bigl(
  W_{\mathcal{R}^{(0, \ell ,i)}_t}^{(0,\ell ,i)}- W^{(0, \ell , i)}_{t}\bigr)
  \right\rvert}{ \mathcal{R}^{(0, \ell ,i)}_t-t}.
\end{split} \label{c04b} \end{equation}
This, the triangle inequality, \eqref{t25}, and \eqref{c04}
show that for all
$m,n\in\N$, 
$k\in\N_0$,
$\xi\in\R^d$,
$ (t_j)_{j\in[0,k]\cap\Z}\in [0,1)^{k+1} $
 with $\forall\, j\in[0,k)\colon t_j<t_{j+1}$ it holds that
\begin{align}
&
\left(\E\!\left[
\left\lvert
\left(f
\circ 
V_{n,m}^0\right)(t_k,\xi + W^0_{t_k}-W_{t_0}^0)
\prod_{j=1}^{k}
\tfrac{f(W_{t_j}^0-W^0_{t_{j-1}})}{\sqrt{\varrho(t_{j-1},t_j)}(t_{j}-t_{j-1})}
\right\rvert^2\right]\right)^{1/2}\nonumber
\\
&\leq  \sum_{i=1}^{m^n}\left(\E\!\left[
\left\lvert
 \tfrac{ \gamma\left( W^{(0,0,-i)}_1-W^{(0,0,-i)}_{t_k}\right)}{m^n}
   \tfrac{ f\!\left(
  W^{(0, 0, -i)}_{1}- W^{(0, 0, -i)}_{t_k}\right)
  }{ 1 - {t_k} }
\prod_{j=1}^{k}
\tfrac{f(W_{t_j}^0-W^0_{t_{j-1}})}{\sqrt{\varrho(t_{j-1},t_j)}(t_{j}-t_{j-1})}
\right\rvert^2\right]\right)^{1/2}\nonumber
\\
&+\sum_{\ell=1}^{n-1}\sum_{i=1}^{m^{n-\ell }}\left(\E\!\left[
\left\lvert
   \tfrac{
\left[ f\circ
  {V}_{\ell ,m}^{(0,\ell ,i)}\right]\!
  \bigl(\mathcal{R}^{(0, \ell ,i)}_{t_k},\xi+W_{\mathcal{R}^{(0, \ell ,i)}_{t_k}}^{(0,\ell ,i)}-W_{t_k}^{(0,\ell ,i)}\bigr)
  }
  {m^{n-\ell }\varrho({t_k},\mathcal{R}^{(0, \ell ,i)}_{t_k})}
   \tfrac{ f\bigl(
  W_{\mathcal{R}^{(0, \ell ,i)}_{t_k}}^{(0,\ell ,i)}- W^{(0, \ell , i)}_{t_k}\bigr)
  }{ \mathcal{R}^{(0, \ell ,i)}_{t_k} - {t_k}}
\prod_{j=1}^{k}
\tfrac{f(W_{t_j}^0-W^0_{t_{j-1}})}{\sqrt{\varrho(t_{j-1},t_j)}(t_{j}-t_{j-1})}
\right\rvert^2\right]\right)^{1/2}\nonumber
\\
&+\sum_{\ell=1}^{n-1}\sum_{i=1}^{m^{n-\ell }}\left(\E\!\left[
\left\lvert
   \tfrac{
\left[f\circ {V}_{\ell -1,m}^{(0,-\ell ,i)}\right]\!
  \bigl(\mathcal{R}^{(0, \ell ,i)}_{t_k},\xi +W_{\mathcal{R}^{(0, \ell ,i)}_{t_k}}^{(0,\ell ,i)}-W_{t_k}^{(0,\ell ,i)}\bigr)
  }
  {m^{n-\ell }\varrho({t_k},\mathcal{R}^{(0, \ell ,i)}_{t_k})}
   \tfrac{ f\bigl(
  W_{\mathcal{R}^{(0, \ell ,i)}_{t_k}}^{(0,\ell ,i)}- W^{(0, \ell , i)}_{t_k}\bigr)
  }{ \mathcal{R}^{(0, \ell ,i)}_{t_k}-{t_k}}
\prod_{j=1}^{k}
\tfrac{f(W_{t_j}^0-W^0_{t_{j-1}})}{\sqrt{\varrho(t_{j-1},t_j)}(t_{j}-t_{j-1})}
\right\rvert^2\right]\right)^{1/2}\nonumber\\
&\leq c_2 c_1^{k+1}2^{k/2}
\left[
\prod_{j=1}^{k}
 \tfrac{\sqrt{1-t_{j-1}}}{\sqrt{t_j-t_{j-1}}}\right]^{1/2}\nonumber\\
&\qquad
+2\sum_{\ell=1}^{n-1}\left(\int_{t_k}^{1}\E\!\left[\left \lvert
(f\circ {V}_{\ell ,m}^{0})\!
  \left(t_{k+1},\xi+W_{t_{k+1}}^{0}-W^0_{t_0}\right)
 \prod_{j=1}^{k+1}
 \tfrac{f(
W_{t_j}^0-W^0_{t_{j-1}})}{\sqrt{\varrho(t_{j-1},t_j)}(t_{j}-t_{j-1})
}
\right \rvert^2\right]
dt_{k+1}\right)^{1/2}.\label{c20}
\end{align}
Moreover, for all $k,N,n\in \N$,   $\ell\in[1,n-1]\cap\Z$ with $k+n\leq  N$ it holds that $(k+1)+\ell\leq N$ and $c_2 c_1^{k+1}2^{k}\leq 
c_2 \max\{c_1^{N},1\}2^{N-1}$.
This, 
\eqref{c20},
the triangle inequality, 
\cref{c02}, and \eqref{c06}
show that for all
$N,m,n,k\in\N$,
$t_0\in[0,1)$, $\xi\in\R^d$
 with $k+n\leq N $
it holds that
\begin{align}
&
\left(\int_{t_0}^1\int_{t_1}^{1}\ldots\int_{t_{k-1}}^{1}
\E\!\left[
\left\lvert
\left(f
\circ 
V_{n,m}^0\right)(t_k,\xi + W^0_{t_k}-W_{t_0}^0)
\prod_{j=1}^{k}
\tfrac{f(W_{t_j}^0-W^0_{t_{j-1}})}{\sqrt{\varrho(t_{j-1},t_j)}(t_{j}-t_{j-1})}
\right\rvert^2\right]dt_{k}dt_{k-1}\cdots dt_{1}\right)^{1/2}\nonumber\\
&\leq c_2 c_1^{k+1}2^{k/2}\left(\int_{t_0}^1\int_{t_1}^{1}\ldots\int_{t_{k-1}}^{1}
\prod_{j=1}^{k}
 \tfrac{\sqrt{1-t_{j-1}}}{\sqrt{t_j-t_{j-1}}}
dt_{k}dt_{k-1}\cdots dt_{1}\right)^{1/2}\nonumber\\
&+2\sum_{\ell=1}^{n-1}
\vastleft{25pt}(\int_{t_0}^1\int_{t_1}^{1}\ldots\int_{t_k}^{1}
\E\!\vastleft{23pt}[\vastleft{23pt} \lvert
(f\circ {V}_{\ell ,m}^{0})\!
  \left(t_{k+1},\xi+W_{t_{k+1}}^{0}-W_{t_0}^0\right)
\nonumber\\
&\qquad\qquad
\qquad\qquad\qquad\qquad\qquad\qquad\qquad\qquad
 \prod_{j=1}^{k+1}
 \tfrac{f(
W_{t_j}^0-W^0_{t_{j-1}})}{\sqrt{\varrho(t_{j-1},t_j)}(t_{j}-t_{j-1})
}
\vastright{23pt} \rvert^2\vastright{23pt}]
dt_{k+1}dt_k\cdots dt_1
\vastright{25pt})^{1/2}\nonumber\\
&\leq c_2 c_1^{k+1}2^{k/2}2^{k/2}+2\sum_{\ell=1}^{n-1}\epsUp_{N,m}(\ell)\leq c_2 \max\{c_1^{N},1\}2^{N-1}+2\sum_{\ell = 1}^{n-1}\epsUp_{N,m}(\ell).
\end{align}
This and \eqref{c06}
show 
for all
$N,m\in \N$, $n\in[1,N-1]\cap\N$ that
\begin{align}
&
\epsUp_{N,m}(n)
\leq c_2 \max\{c_1^{N},1\}2^{N-1}+2\sum_{\ell = 1}^{n-1}\epsUp_{N,m}(\ell).
\label{c05}\end{align}
We will prove  for all $m,N\in\N$, $n\in[1,N-1]\cap\N$ that
$\epsUp_{N,m}(n)\leq c_2 \max\{c_1^{N},1\}2^{N-1}3^{n-1}$
by induction on $n$. 
First, \eqref{c05} shows 
for all $N,m\in\N $ that
$
\epsUp_{N,m}(1)\leq 
c_2 \max\{c_1^{N},1\}2^{N-1}
$. This establishes
the base case $n=1 $. For the induction step let $m,N\in\N$, $n\in[1,N-1]\cap\N$ satisfy that for all
$\ell\in [1,n-1]\cap\N$ it holds that $\epsUp_{N,m}(\ell)\leq c_2 \max\{c_1^{N},1\}2^{N-1}3^{\ell-1}$. Then \eqref{c05} 
shows that
\begin{align}\begin{split}
&
\epsUp_{N,m}(n)\leq
c_2 \max\{c_1^{N},1\}2^{N-1}\left[1+
2\sum_{\ell = 1}^{n-1}3^{\ell-1}\right]
=
c_2 \max\{c_1^{N},1\}2^{N-1}3^{n-1}.
\end{split}\end{align}
This shows the induction step. Induction hence proves 
for all $m,N\in\N$, $n\in[1,N-1]\cap\N$ that
\begin{align}\label{c15}
\epsUp_{N,m}(n)\leq c_2 \max\{c_1^{N},1\}2^{N-1}3^{n-1}.
\end{align}
This, \eqref{c20},  \eqref{c06},
 \eqref{t15}, and \eqref{t18}
 show for all $n,m\in\N$, $t_0\in[0,1)$, $\xi\in\R^d$ that
\begin{align}\begin{split}
&
\left(\E\!\left[
\left\lvert
\left(f
\circ 
V_{n,m}^0\right)(t_0, \xi)
\right\rvert^2\right]
\right)^{1/2} \\
&\leq 
c_2 c_1
+2\sum_{\ell =1}^{n-1}
\Biggl(\int_{t_0}^1
\E\Biggl[\biggl \lvert
(f\circ {V}_{\ell ,m}^{0})\!
  \left(t_{1},\xi+W_{t_{1}}^{0}-W_{t_0}^0\right)
 \tfrac{f(
W_{t_1}^0-W^0_{t_{0}})}{\sqrt{\varrho(t_{0},t_1)}(t_{1}-t_{0})
}
\biggr \rvert^2\Biggr]
dt_1
\Biggr)^{1/2} \\
&\leq c_1c_2+ 2\sum_{\ell=1}^{n-1}\epsUp_{\ell+1,m}(\ell)
\leq c_1c_2+ 2\sum_{\ell=1}^{n-1}\left[c_2\max\{c_1^{\ell+1},1\}2^{\ell}3^{\ell-1}\right]
\\
&\leq  c_2\max\{c_1^{n},1\}2^{n-1}\left[1+2\sum_{\ell=1}^{n-1}3^{\ell-1}\right] 
=c_2\max\{c_1^{n},1\}2^{n-1}\left[
1+2
\tfrac{3^{n-1}-1}{2}\right]\\
&
=
c_2\max\{c_1^{n},1\}2^{n-1}3^{n-1} =
\left(\E\!\left[\lvert \gamma( W^0_1) \rvert^4\right]\right)^{1/4}
 \max\left\{\left(\E\!\left[\lvert f( W^0_1)\rvert^4\right]\right)^{n/4},1\right\}6^{n-1}<\infty
.
\label{c16}\end{split}
\end{align}
This proves \eqref{c13}.

Next, \eqref{c01}, Jensen's inequality, and \eqref{t15} show for all
$\theta\in \Theta$ that
\begin{align}
&
 \left(\E\!\left[
\left\lvert g(W_1^\theta)-g(0)\right\rvert^2 \right]\right)^{1/2}
\leq \left(\E\!\left[
\left\lvert \gamma(W_1^0)\right\rvert^2 \right]\right)^{1/2}\leq 
\left(\E\!\left[
\left\lvert \gamma(W_1^0)\right\rvert^4 \right]\right)^{1/4}
=c_2.\label{t16b}
\end{align}
Moreover, \eqref{c01}, H\"older's inequality,
\eqref{t15},  and
 the fact that $\lVert W^0_1\rVert^2$ is chi-square distributed with $d$ degrees of freedom show 
 for all
$\theta\in \Theta$
that
\begin{align}\begin{split}
&
 \left(\E\!\left[
\left\lvert g(W_1^\theta)-g(0)\right\rvert^2 \left\lVert W_1^\theta\right\rVert^2\right]\right)^{1/2}
\leq \left(\E\!\left[
\left\lvert \gamma(W_1^\theta)\right\rvert^2 \left\lVert W_1^\theta\right\rVert^2\right]\right)^{1/2}\\
&\leq 
\left(\E\!\left[
\left\lvert \gamma(W_1^0)\right\rvert^4 \right]\right)^{1/4}
\left(\E\!\left[ \left\lVert W_1^0\right\rVert^4\right]\right)^{1/4}
=c_2(d(d+2))^{1/4}\leq c_2\sqrt{d+1}.\label{t16}
\end{split}\end{align}
Furthermore,  independence and distributional properties of MLP approximations (see \cite[Lemma~3.2]{hutzenthaler2021overcoming}),
the disintegration theorem (see, e.g., \cite[Lemma 2.2]{hutzenthaler2020overcoming}),
the fact that $\forall\,b\in [0,1] \colon
\P(\uniform^0_0\leq b)=
\P(\unif^0\leq b)=\int_{0}^{b}\varrho (0,s)\,ds$, 
 the fact that
$\E[\lVert W_1^0 \rVert^2]=d$, 
\eqref{c16},  the fact that
$\int_{0}^{1}2\sqrt{s}\,ds=\frac{2s^{1.5}}{1.5}\bigr|_{s=0}^1= \frac{4}{3}$,
and the fact that
$\int_{0}^{1}\frac{2\,ds}{\sqrt{s}} = 4$
 show for all $\ell,m,i\in\N$,
$\iota\in\{0,1\}$,
$\lambda\in \{-\ell,\ell\} $
 that
\begin{align}
&
\left(
\E\!\left[\left\lvert
  \tfrac{(f\circ V_{\ell-\iota,m}^{(0,\lambda,i)})(\uniform_0^{(0,\ell,i)},W^{(0,\ell,i)}_{\uniform_0^{(0,\ell,i)}})}{\varrho(0,\uniform_0^{(0,\ell,i)})}  
\right\rvert^2\right]\right)^{1/2}=
\left(
\E\!\left[\left\lvert
  \tfrac{(f\circ V_{\ell-\iota,m}^0)(\uniform_0^0,W^0_{\uniform_0^0})}{\varrho(0,\uniform_0^0)}  
\right\rvert^2\right]\right)^{1/2}\nonumber\\
&
=
\left(\int_{0}^{1}
\E\!\left[\left\lvert
  \tfrac{(f\circ V_{\ell-\iota,m}^0)(s,W^0_{s})}{\varrho(0,s)}  \right\rvert^2\right]
\varrho(0,s)\,ds
\right)^{1/2}=
\left(\int_{0}^{1}  \tfrac{\E\!\left[\E\left[\lvert(f\circ V_{\ell-\iota,m}^0)(s,x)\rvert^2
\right]\bigr|_{x=W_s^0}\right]}{\varrho(0,s)}\,ds
\right)^{1/2}\nonumber\\
&\leq 
c_2 \max\{c_1^{\ell-\iota},1\}6^{\ell-\iota-1}
 \left(\int_{0}^{1}  \tfrac{ds}{ \tfrac{1}{2\sqrt{s}}}\right)^{1/2}
\leq 
c_2 \max\{c_1^{\ell},1\}6^{\ell-1}
 \left(\int_{0}^{1}  2\sqrt{s}\,ds\right)^{1/2}\nonumber
\\
&=  c_2 \max\{c_1^{\ell},1\}6^{\ell-1} \tfrac{2}{\sqrt{3}} \leq c_2 \max\{c_1^{\ell},1\}6^\ell\label{c11b}
\end{align}
and
\begin{align}
&
\left(
\E\!\left[\left\lVert
 \tfrac{(f\circ V_{\ell-\iota,m}^{(0,\lambda,i)})(\uniform_0^{(0,\ell,i)},W^{(0,\ell,i)}_{\uniform_0^{(0,\ell,i)}})}{\varrho(0,\uniform_0^{(0,\ell,i)})}  \frac{W^{(0,\ell,i)}_{\uniform_0^{(0,\ell,i)}}}{\uniform_0^{(0,\ell,i)}}\right\rVert^2\right]\right)^{1/2}
= 
\left(
\E\!\left[\left\lVert
  \tfrac{(f\circ V_{\ell-\iota,m}^0)(\uniform_0^0,W^0_{\uniform_0^0})}{\varrho(0,\uniform_0^0)}  \tfrac{W^0_{\uniform_0^0}}{\uniform_0^0}\right\rVert^2\right]\right)^{1/2}\nonumber\\
&
=
\left(\int_{0}^{1}
\E\!\left[\left\lVert
  \tfrac{(f\circ V_{\ell-\iota,m}^0)(s,W^0_{s})}{\varrho(0,s)}  \tfrac{W^0_{s}}{s}\right\rVert^2\right]
\varrho(0,s)\,ds
\right)^{1/2}
=\left(\int_{0}^{1}  \tfrac{\E\!\left[\left(\E\left[\lvert(f\circ V_{\ell-\iota,m}^0)(s,x)\rvert^2\right] \lVert x\rVert^2
\right)
\Bigr|_{x=W_s^0}
\right]  
}{s^2\varrho(0,s)}\,ds
\right)^{1/2}\nonumber\\
&\leq 
c_2 \max\{c_1^{\ell},1\}6^{\ell-1}
 \left(\int_{0}^{1}  \tfrac{\E\bigl[\lVert W^0_s\rVert^2\bigr]\,ds}{s^2  \frac{1}{2\sqrt{s}}}\right)^{1/2}
=
c_2 \max\{c_1^{\ell},1\}6^{\ell-1}\sqrt{d}
 \left(\int_{0}^{1}  \frac{2\,ds}{\sqrt{s}}\right)^{1/2}\nonumber
\\
&= c_2 \max\{c_1^{\ell},1\}6^{\ell-1}2\sqrt{d}
\leq c_2 \max\{c_1^{\ell},1\}6^\ell\sqrt{d}
.
\label{c11}
\end{align}
Next, \eqref{c10} 
shows for all 
$n,m\in\N$ that
\begin{equation}  \begin{split}
  &{U}_{n,m}^{0}(0,0) -g(0)
 =   
  \sum_{i=1}^{m^n} \tfrac{g\bigl(W^{(0,0,-i)}_1\bigr)-g(0)}{m^n}  
  +\sum_{\ell =1}^{n-1}\sum_{i=1}^{m^{n-\ell }}
   \tfrac{
\left( f\circ
  {V}_{\ell ,m}^{(0,\ell ,i)}-  f\circ {V}_{\ell -1,m}^{(0,-\ell ,i)}\right)\!
  \bigl(\mathcal{R}^{(0, \ell ,i)}_0,W_{\mathcal{R}^{(0, \ell ,i)}_0}^{(0,\ell ,i)}\bigr)
  }
  {m^{n-\ell }\varrho(0,\mathcal{R}^{(0, \ell ,i)}_0)}.
\end{split}     \end{equation}
This,
 the triangle inequality, \eqref{t16b}, and \eqref{c11b}
show for all $n,m\in \N$ that
\begin{align}
&\left(
\E\!\left[
\left\lvert{U}_{n,m}^{0}(0,0)-g(0)\right \rvert^2\right]\right)^{1/2}
\leq \sum_{i=1}^{m^n}
\left(
\E\!\left[
\left\lvert
 \tfrac{g\bigl(W^{(0,0,-i)}_1\bigr)-g(0)}{m^n}  
\right \rvert^2\right]\right)^{1/2}\nonumber\\
&+
\sum_{\ell =1}^{n-1}\sum_{i=1}^{m^{n-\ell }}\left[
\left(
\E\!\left[
\left\lvert
   \tfrac{
\left( f\circ
  {V}_{\ell ,m}^{(0,\ell ,i)}\right)\!
  \bigl(\mathcal{R}^{(0, \ell ,i)}_0,W_{\mathcal{R}^{(0, \ell ,i)}_0}^{(0,\ell ,i)}\bigr)
  }
  {m^{n-\ell }\varrho(0,\mathcal{R}^{(0, \ell ,i)}_0)}
\right \rvert^2\right]\right)^{1/2}
+
\left(
\E\!\left[
\left\lvert
   \tfrac{
\left( f\circ
  {V}_{\ell -1,m}^{(0,-\ell ,i)}\right)\!
  \bigl(\mathcal{R}^{(0, \ell ,i)}_0,W_{\mathcal{R}^{(0, \ell ,i)}_0}^{(0,\ell ,i)}\bigr)
  }
  {m^{n-\ell }\varrho(0,\mathcal{R}^{(0, \ell ,i)}_0)}
\right \rvert^2\right]\right)^{1/2}\right]\nonumber\\
&\leq c_2+
2\sum_{\ell=1}^{n-1} \left[c_2 \max\{c_1^{\ell},1\}6^\ell\right]
\leq 
2\sum_{\ell=0}^{n-1} \left[c_2 \max\{c_1^{\ell},1\}6^\ell\right]
\leq 2c_2 \max\{c_1^{n},1\}\sum_{\ell=0}^{n-1}6^\ell\nonumber\\
&=
2c_2 \max\{c_1^{n},1\}
\tfrac{6^n-1}{5}
 \leq 0.5\cdot c_2 \max\{c_1^{n},1\}6^n.
\label{d10}
\end{align}
Next, \eqref{c10} 
shows for all 
$n,m\in\N$ that
\begin{equation}  \begin{split}
&  {V}_{n,m}^{0}(0,0)\\
&=
  \sum_{i=1}^{m^n} \left[\tfrac{g\bigl(W^{(0,0,-i)}_1\bigr)-g(0)}{m^n}  
  W^{(0, 0, -i)}_{1}\right]
   +\sum_{\ell =1}^{n-1}\sum_{i=1}^{m^{n-\ell }}
\left[
   \tfrac{
\left( f\circ
  {V}_{\ell ,m}^{(0,\ell ,i)}- f\circ {V}_{\ell -1,m}^{(0,-\ell ,i)}\right)\!
  \bigl(\mathcal{R}^{(0, \ell ,i)}_0,W_{\mathcal{R}^{(0, \ell ,i)}_0}^{(0,\ell ,i)}\bigr)
  }
  {m^{n-\ell }\varrho(0,\mathcal{R}^{(0, \ell ,i)}_0)}
   \tfrac{ 
  W_{\mathcal{R}^{(0, \ell ,i)}_0}^{(0,\ell ,i)}
  }{ \mathcal{R}^{(0, \ell ,i)}_0}\right].
\end{split}  \label{c11c}   \end{equation}
This, the triangle inequality, \eqref{t16}, \eqref{c11}, and the fact
$\sqrt{d+1}\leq 2\sqrt{d}$
 show for all
$n,m\in\N $ that
\begin{align}\begin{split}
&
\left(
\E\!\left[
\left\lVert
{V}_{n,m}^{0}(0,0)\right \rVert^2\right]\right)^{1/2}
\leq   \sum_{i=1}^{m^n} 
\left(
\E\!\left[
\left\lVert
\tfrac{g\bigl(W^{(0,0,-i)}_1\bigr)-g(0)}{m^n}  
  W^{(0, 0, -i)}_{1}
\right \rVert^2\right]\right)^{1/2}
 \\
& +\sum_{\ell =1}^{n-1}\sum_{i=1}^{m^{n-\ell }}
\left(
\E\!\left[
\left\lVert
   \tfrac{
\left( f\circ
  {V}_{\ell ,m}^{(0,\ell ,i)}\right)\!
  \bigl(\mathcal{R}^{(0, \ell ,i)}_0,W_{\mathcal{R}^{(0, \ell ,i)}_0}^{(0,\ell ,i)}\bigr)
  }
  {m^{n-\ell }\varrho(0,\mathcal{R}^{(0, \ell ,i)}_0)}
   \tfrac{ 
  W_{\mathcal{R}^{(0, \ell ,i)}_0}^{(0,\ell ,i)}
  }{ \mathcal{R}^{(0, \ell ,i)}_0}
\right \rVert^2\right]\right)^{1/2}
\\
&
+\sum_{\ell =1}^{n-1}\sum_{i=1}^{m^{n-\ell }}\left(
\E\!\left[
\left\lVert
   \tfrac{
\left(  f\circ {V}_{\ell -1,m}^{(0,-\ell ,i)}\right)\!
  \bigl(\mathcal{R}^{(0, \ell ,i)}_0,W_{\mathcal{R}^{(0, \ell ,i)}_0}^{(0,\ell ,i)}\bigr)
  }
  {m^{n-\ell }\varrho(0,\mathcal{R}^{(0, \ell ,i)}_0)}
   \tfrac{ 
  W_{\mathcal{R}^{(0, \ell ,i)}_0}^{(0,\ell ,i)}
  }{ \mathcal{R}^{(0, \ell ,i)}_0}\right \rVert^2\right]\right)^{1/2}
\\
&\leq c_2\sqrt{d+1}+
2\sum_{\ell=1}^{n-1} \left[c_2 \max\{c_1^{\ell},1\}6^\ell\sqrt{d}\right]
\leq 
2\sum_{\ell=0}^{n-1} \left[c_2 \max\{c_1^{\ell},1\}6^\ell\sqrt{d}\right]\\
&
\leq 2 c_2 \max\{c_1^{n},1\}\sqrt{d}\sum_{\ell=0}^{n-1}6^\ell
=2 c_2 \max\{c_1^{n},1\}\sqrt{d}\tfrac{6^n-1}{5}\leq 0.5\max\{c_1^{n},1\}6^n
\sqrt{d}
.
\end{split}\end{align}
This, the triangle inequality, \eqref{d10},
the fact that $0.5 \sqrt{d+1}\leq \sqrt{d}$, \eqref{t15}, and \eqref{t18}
 show for all $n,m\in\N$ that
\begin{align}
&\left(
\E\!\left[
\left\lvert{U}_{n,m}^{0}(0,0)\right \rvert^2\right]+
\E\!\left[
\left\lVert{V}_{n,m}^{0}(0,0)\right \rVert^2\right]\right)^{1/2}\leq \lvert g(0)\rvert+
\left(
\E\!\left[
\left\lvert{U}_{n,m}^{0}(0,0)-g(0)\right \rvert^2\right]+
\E\!\left[
\left\lVert{V}_{n,m}^{0}(0,0)\right \rVert^2\right]\right)^{1/2}\nonumber\\
&\leq \lvert g(0)\rvert+
0.5\cdot c_2 \max\{c_1^{n},1\}6^n\sqrt{1+d}
\leq \lvert g(0)\rvert + c_2 \max\{c_1^{n},1\}6^n\sqrt{d}\nonumber\\
&
= \lvert g(0)\rvert+
\left(\E\!\left[\lvert \gamma( W^0_1) \rvert^4\right]\right)^{1/4}
 \max\left\{\left(\E\!\left[\lvert f( W^0_1)\rvert^4\right]\right)^{n/4},1\right\}6^n\sqrt{d}<\infty
.
\end{align}
This 
 shows \eqref{c14}. This completes the proof of \cref{m01}.
\end{proof}

\section{Lower bounds for MLP approximations for our counterexample}
A difficulty in the derivation of lower bounds for MLP approximations
are the sums on the right-hand side of the MLP recursion \eqref{c10}.
Using symmetry we show that the summand on the right-hand side of \eqref{c10}
are centered in our case.
Thus second moments are equal to variances.
The following lemma then estimates the variance of the sum over independent random fields
evaluated at the same random and independent vector from below by the sum of the variances. 

\begin{lemma}\label{t01}Let $(\Omega, \mathcal{F},\P)$ be a  probability space,
let
$m,n\in\N$, 
let
$F_1,F_2,\ldots,F_n\colon \R^m\times \Omega\to\R $, be 
i.i.d.\ random fields, let 
$X\colon\Omega\to \R^m $ be a
random variable, 
assume that $\{F_1,F_2,\ldots,F_n\} $ and $X$ are independent,
and
assume 
that
$\E[\lvert F_1(X)\rvert^2]<\infty$.
Then 
$
\var \bigl[\sum_{i=1}^{n} F_i(X)\bigr] \geq n
\var \!\left( F_1(X)\right).$
\end{lemma}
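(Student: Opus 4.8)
The plan is to condition on $X$ and exploit that, conditionally on $X$, the random variables $F_1(X),\dots,F_n(X)$ are independent and identically distributed. To make this precise I would first fix the law $\mu$ of $X$ on $\R^m$ and introduce Borel measurable functions $g,h\colon\R^m\to\R$ which satisfy for $\mu$-almost every $x\in\R^m$ that $g(x)=\E[F_1(x)]$ and $h(x)=\E[|F_1(x)|^2]$. The assumption $\E[|F_1(X)|^2]<\infty$, the independence of $\{F_1,\dots,F_n\}$ and $X$, and the disintegration theorem (see, e.g., \cite[Lemma~2.2]{hutzenthaler2020overcoming}) show that $h$ is $\mu$-integrable with $\int_{\R^m}h\,d\mu=\E[|F_1(X)|^2]<\infty$, and the Cauchy--Schwarz inequality shows $|g|\le\sqrt{h}$ $\mu$-a.e., so that $g$ is square $\mu$-integrable; in particular $\var[F_1(X)]$ and $\var[g(X)]$ are finite.

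Next I would compute the second-order quantities appearing in the expansion of the variance of $S=\sum_{i=1}^n F_i(X)$. For every $i\in\{1,\dots,n\}$ independence of $F_i$ and $X$ yields $\E[F_i(X)]=\int_{\R^m}g\,d\mu$ and $\E[|F_i(X)|^2]=\int_{\R^m}h\,d\mu$. For $i,j\in\{1,\dots,n\}$ with $i\neq j$ independence of the pair $(F_i,F_j)$ from $X$, followed by independence of $F_i$ and $F_j$ and the fact that they are identically distributed, yields $\E[F_i(X)F_j(X)]=\int_{\R^m}\E[F_i(x)F_j(x)]\,\mu(dx)=\int_{\R^m}g(x)^2\,\mu(dx)$. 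Since there are $n$ diagonal and $n(n-1)$ off-diagonal index pairs and $\E[S]=n\int_{\R^m}g\,d\mu$, these identities give
\[
\var[S]=n\int_{\R^m}h\,d\mu+n(n-1)\int_{\R^m}g^2\,d\mu-n^2\Bigl(\int_{\R^m}g\,d\mu\Bigr)^{2}.
\]

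Finally I would subtract $n$ times the law-of-total-variance identity $\var[F_1(X)]=\int_{\R^m}h\,d\mu-\bigl(\int_{\R^m}g\,d\mu\bigr)^2$ from the previous display to obtain
\[
\var[S]-n\,\var[F_1(X)]=n(n-1)\Bigl[\int_{\R^m}g^2\,d\mu-\Bigl(\int_{\R^m}g\,d\mu\Bigr)^2\Bigr]=n(n-1)\,\var[g(X)]\ge0,
\]
which is exactly the claimed bound. (Alternatively, the same identity follows at once from the law of total variance: conditionally on $X$ the summands are independent and identically distributed, so $\var(S\mid X)=n\,\var(F_1(X)\mid X)$ and $\E[S\mid X]=n\,\E[F_1(X)\mid X]$, whence $\var[S]=n\E[\var(F_1(X)\mid X)]+n^2\var(\E[F_1(X)\mid X])\ge n\var[F_1(X)]$.) I expect the only real work to be the measure-theoretic bookkeeping behind the first paragraph --- verifying Borel measurability and $\mu$-a.e.\ finiteness of $g$ and $h$ and justifying the disintegration identities, which is routine given the joint measurability of the random fields and the cited disintegration lemma; the algebraic combination itself is immediate.
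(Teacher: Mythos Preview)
Your proof is correct and follows essentially the same approach as the paper: both expand the variance, use disintegration together with independence to identify the off-diagonal terms $\E[F_i(X)F_j(X)]$ as $\E[g(X)^2]$, and then invoke Jensen (equivalently, $\var[g(X)]\ge 0$) to conclude that the covariances are nonnegative. Your version is marginally more explicit in that it records the exact excess $\var[S]-n\var[F_1(X)]=n(n-1)\var[g(X)]$ and offers the law-of-total-variance reformulation, whereas the paper stops at the inequality, but the underlying argument is the same.
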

\begin{proof}[Proof of \cref{t01}]
Throughout the proof let $A\subseteq \R^m$ satisfy that
$A=\{x\in\R^m\colon \E [\lvert F_1(x)\rvert^2]<\infty\}$.
Then the disintegration theorem (see, e.g., \cite[Lemma 2.2]{hutzenthaler2020overcoming}) 
and the fact that
$\E[\lvert F_1(X)\rvert^2]<\infty$
 show that
$
\infty\cdot \P(X\notin A)\leq \E \Bigl[\E [\lvert F_1(x)\rvert^2]\bigr|_{x=X}\1_{X\notin A}\Bigr]\leq \E \Bigl[\E [\lvert F_1(x)\rvert^2]\bigr|_{x=X}\Bigr]= \E \bigl[\lvert F_1(X)\rvert^2\bigr]<\infty.
$
Hence, $\P(X\notin A)=0$ and $\P(X\in A)=1$.
This, the disintegration theorem (see, e.g., \cite[Lemma 2.2]{hutzenthaler2020overcoming}),  the assumptions, and Jensen's inequality show that for all
$i,j\in \{1,2,\ldots,n\}$ with $i\neq j$ it holds that
\begin{align}\begin{split}
&
\var [F_i(X)]
=\E \!\left[\1_{X\in A}\lvert F_i(X)\rvert^2\right]
-\left\lvert\E [\1_{X\in A} F_i(X)]\right\rvert^2\\
&
=\E \!\left[\1_{X\in A}
\E\!\left[\lvert  F_i(x)\rvert^2 \right]\Bigr|_{x=X}\right]
-\left\lvert \E \!\left[ \1_{X\in A}\E\! \left[  F_i(x)\right]\Bigr|_{x=X}\right] \right\rvert^2
= \var[F_1(X)] \\
\end{split}\end{align}
and
\begin{align}\begin{split}
&
\E\!\left[F_i(X)F_j(X) \right]= \E\!\left[\1_{X\in A}\E\!\left[
 F_i(x)F_j(x)\right]\Bigr|_{x=X}\right]
=
\E\!\left[\1_{X\in A}
\E\!\left[
F_i(x)\right]\E\!\left[
F_j(x)\right]\Bigr|_{x=X}\right]\\
&
=
\E\!\left[\1_{X\in A} \Bigl\lvert\E\!\left[F_1(x)\right]\Bigr \rvert^2\Bigr|_{x=X}\right]\geq 
\left\lvert\E\!\left[\1_{X\in A} \E\!\left[F_1(x)\right]\Bigr|_{x=X}\right]\right\rvert^2\\
&
=
\E\!\left[ \1_{X\in A}\E\!\left[F_i(x)\right]\Bigr|_{x=X}\right]
\E\!\left[\1_{X\in A} \E\!\left[F_j(x)\right]\Bigr|_{x=X}\right]= \E\!\left[ F_i(X)\right]
\E\!\left[ F_j(X)\right].
\end{split}\end{align}
Therefore, it holds that
\begin{align}\small\begin{split}
\var \!\left(\sum_{i=1}^{n} F_i(X)\right) &= 
\sum_{i=1}^{n}
\var \!\left( F_i(X)\right) + \sum_{\substack{i,j=1\\i\neq j}}^{n}
\Bigl(
\E\!\left[F_i(X)F_j(X)\right]-\E\!\left[F_i(X)\right]\E\!\left[F_j(X)\right]
\Bigr)\geq n
\var \!\left( F_1(X)\right).
\end{split}\end{align}The proof of \cref{t01} is thus completed.
\end{proof}
It turns out that it is easier in case of our counterexample
to derive a recursion for the second moment of
the nonlinearity applied to the MLP approximations.
From this we then derive in \cref{c12} below a lower bound
for the approximation error of MLP approximations in the case of our counterexample.
\begin{theorem}\label{d01}
Assume \cref{s01} and assume
for all $v=(v_1,v_2,\ldots,v_d)\in\R^d$ that
\begin{align}
g(v)=\lvert v_1\rvert\quad\text{and}\quad f(v)=\left(\textstyle\sum_{j=2}^{d}\lvert v_j\rvert^2\right)^{1/2}.\label{t19}
\end{align}
Then for all 
$n,m\in\N$ with
$d\geq (1224m)^n n!$ it holds that
$\left(
\E\!\left[
\left\lvert
\left(f
\circ 
V_{n,m}^0\right)(0, 0)
\right\rvert^2
\right]
\right)^{1/2}\geq\tfrac{d^{n/2}}{(34m)^{n/2}  \sqrt{n!}}.
$
\end{theorem}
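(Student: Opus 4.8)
We track, for $n\in\N$, the function $\psi_n\colon[0,1)\to[0,\infty)$ defined by $\psi_n(s)=\E[\,|(f\circ V^0_{n,m})(s,\zeta_s)|^2\,]$, where $\zeta_s$ is an $N(0,s)$-distributed random variable independent of all $W^\theta$ and all $\unif^\theta$; in particular $\psi_n(0)=\E[\,|(f\circ V^0_{n,m})(0,0)|^2\,]$ is the quantity in the statement. A structural observation, proved by induction on $n$, is that since $g(v)=|v_1|$ depends on $v$ only through $v_1$ and $f$ depends on $v$ only through $v_2,\dots,v_d$, the random field $(f\circ V^0_{n,m})(t,x)$ depends on $x$ only through $x_1$. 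Consequently, in the recursion \eqref{c10} evaluated at $(0,0)$, the first component of each increment $W_{\mathcal{R}_t}-W_t$ (which enters as the argument of $f\circ V_{\ell,m}$) decouples from its remaining $d-1$ components (which enter only as the vectorial weight). Using in addition the sign-flip of components $2,\dots,d$ of all Brownian motions — a measure-preserving map under which every summand of \eqref{c10}, in components $2,\dots,d$, changes sign, hence is centered — together with the independence of distinct blocks and \cref{t01} (applied componentwise to the i.i.d.\ summands of each block), discarding the nonnegative leading $g$-block and all blocks except $\ell=n-1$, and disintegrating over the random time $\mathcal{R}_t$ and the random point $W_{\mathcal{R}_t}$ (using independent increments, the fact that the last $d-1$ components of $W^0_s-W^0_t$ have total second moment $(d-1)(s-t)$, and the explicit $\varrho$, the clean cancellation being $\varrho(t,s)(s-t)=\tfrac{\sqrt{s-t}}{2\sqrt{1-t}}$), one obtains, with $\delta_k(s):=\E[\,|(f\circ V^0_{k,m}-\1_{\N}(k)\,f\circ V^{\tilde 0}_{k-1,m})(s,\zeta_s)|^2\,]$ (where $\tilde 0\in\Theta\setminus\{0\}$, so that the two fields are independent),
\begin{equation}\label{eq:key}
\psi_n(t)\ \geq\ \frac{4(d-1)(1-t)}{m}\int_t^1 \delta_{n-1}(s)\,\varrho(t,s)\,ds .
\end{equation}

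The second step is a base estimate plus an inductive lower bound of the affine form $\psi_k(s)\geq a_k(1-s)^{k-1}-e_k$ with explicit $a_k,e_k\geq0$. For $k=1$ only the $g$-block survives in \eqref{c10}, yielding $\psi_1(s)=\tfrac{d-1}{m(1-s)}\E[(|\zeta_s+\xi|-|\zeta_s|)^2]$ with $\xi\sim N(0,1-s)$ independent of $\zeta_s$; since $(|a+\xi|-|a|)^2=\xi^2$ whenever $a$ and $a+\xi$ have the same sign, and given $\xi$ this event has probability at least $\tfrac12$, one gets $\E[(|\zeta_s+\xi|-|\zeta_s|)^2]\geq\tfrac{1-s}{2}$, hence $\psi_1(s)\geq\tfrac{d-1}{2m}=:a_1$ (and $e_1:=0$). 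For $n\geq2$ we use $(a-b)^2\geq\tfrac12a^2-b^2$, the induction hypothesis for $\psi_{n-1}$, the upper bound $\psi_{n-2}(s)\leq U_{n-2}:=2d^{\,n-2}36^{\,n-3}$ coming from \cref{m01} (see \eqref{c13}, with $\gamma(v)=|v_1|$, so $(\E[|\gamma(W^0_1)|^4])^{1/4}=3^{1/4}$ and $(\E[|f(W^0_1)|^4])^{1/4}=(d^2-1)^{1/4}$), and $\delta_{n-1}\geq0$, to obtain $\delta_{n-1}(s)\geq\tfrac12a_{n-1}(1-s)^{n-2}-(\tfrac12e_{n-1}+U_{n-2})$. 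Substituting this in \eqref{eq:key}, and using $\int_t^1\varrho(t,s)\,ds=1$ together with $\int_t^1(1-s)^{n-2}\varrho(t,s)\,ds=(1-t)^{n-2}\int_0^1\tfrac{(1-v)^{n-2}}{2\sqrt v}\,dv\geq\tfrac{(1-t)^{n-2}}{2(n-1)}$ and $1-t\leq1$ in the error term, the hypothesis propagates with $a_n=\tfrac{(d-1)a_{n-1}}{m(n-1)}$ for $n\geq3$ (and $a_2=\tfrac{4(d-1)}{m}a_1$, $e_2=0$) and $e_n=\tfrac{2(d-1)e_{n-1}+4(d-1)U_{n-2}}{m}$.

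Solving the recursions gives $a_n\geq\tfrac{(d-1)^n}{2m^n(n-1)!}$, while unwinding the linear recursion for $e_n$ (a geometric sum whose successive terms grow by a factor at least $18$, hence dominated by its last term) gives $e_n\leq 36^n d^{\,n-1}$; the hypothesis $d\geq(1224m)^n n!$ is exactly what is needed to guarantee $e_n\leq\tfrac12a_n$. Granting this, $\psi_n(0)\geq a_n-e_n\geq\tfrac12a_n\geq\tfrac{(d-1)^n}{4m^n(n-1)!}=\tfrac{n(d-1)^n}{4m^n\,n!}\geq\tfrac{d^n}{(34m)^n n!}$ (the last inequality because $d\geq2$ gives $n\cdot34^n(1-\tfrac1d)^n\geq n\cdot17^n\geq4$); taking square roots completes the proof.

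The principal obstacle is the block computation behind \eqref{eq:key}: one has to bookkeep precisely which family of Brownian motions and which uniforms enter each factor, disintegrate over both the random evaluation time and the random evaluation point, and verify that the powers of $\varrho$, of $m$ and of the increment variances collapse into the single constant $4(d-1)(1-t)/m$. A secondary difficulty is that the telescoping difference $f\circ V^0_{n-1,m}-f\circ V^{\tilde0}_{n-2,m}$ inside $\delta_{n-1}$ is what forces a $d$-dependent smallness hypothesis, and that the induction must be run in the affine form $a_k(1-s)^{k-1}-e_k$ rather than with a constant lower bound, because the leading term $a_k(1-s)^{k-1}$ degenerates as $s\uparrow1$ while the available upper bound for $\psi_{n-2}$ does not.
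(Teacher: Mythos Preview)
Your proof is correct, and it takes a genuinely different route from the paper's.

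The paper tracks the two-index family
\[
\epsDown_{k,m}(n)=\Bigl(\int_{0}^1\!\!\cdots\!\!\int_{t_{k-1}}^1
\E\Bigl[\Bigl|(f\circ V^0_{n,m})(t_k,W^0_{t_k})
\prod_{j=1}^{k}\tfrac{f(W^0_{t_j}-W^0_{t_{j-1}})}{\sqrt{\varrho(t_{j-1},t_j)}(t_j-t_{j-1})}\Bigr|^2\Bigr]\,dt_k\cdots dt_1\Bigr)^{1/2},
\]
proves a one-step descent $\epsDown_{k,m}(n)\geq (2m)^{-1/2}\epsDown_{k+1,m}(n-1)-\text{(error)}$ via an $L^2$ reverse-triangle inequality of the form $(\E|f(X+Y)|^2)^{1/2}\geq (\tfrac12\sum_{\nu\geq 2}\E|X_\nu|^2)^{1/2}-(\E|f(Y)|^2)^{1/2}$, iterates this $n-1$ times, and then lower-bounds the base case $\epsDown_{n-1,m}(1)$ by a direct calculation on a small time window $[0,\tfrac{1}{17}]$.

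Your approach bypasses the iterated product structure entirely by exploiting a feature specific to this $f,g$: since $g$ depends only on $x_1$ and $f$ only on $x_{2},\dots,x_d$, one proves by induction that $(f\circ V^\theta_{n,m})(t,x)$ depends on $x$ only through $x_1$. This decouples, in every summand of the MLP recursion, the scalar factor from the $(d-1)$ relevant components of the Brownian weight, so that each component $j\geq 2$ of each summand is centered conditionally on the evaluation point and (conditionally) independent across blocks and indices; the second moment therefore splits \emph{exactly} as a sum over blocks, and retaining only $\ell=n-1$ gives the clean one-variable recursion \eqref{eq:key}. You then close an affine ansatz $\psi_k(s)\geq a_k(1-s)^{k-1}-e_k$ with elementary Beta-integral and geometric-sum estimates. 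This is shorter and more transparent than the paper's argument, at the price of being tied to the coordinate-separation structure of $(f,g)$; the paper's reverse-triangle machinery and iterated integrals do not need that separation and would adapt to more general seminorm-type nonlinearities.
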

\begin{proof}[Proof of \cref{d01}]
Throughout this proof
for every $s\in [0,1]$, $i\in\{1,2,\ldots,d\}$ let $W_{i,s}^0$ be the $i$-th coordinate of $W_s^0$,
 let $\epsDown_{k,m}\colon\N_0\to[0,\infty]$, $k\in\N_0$,
$m\in\N $, satisfy for all $k,m\in \N $, 
$n\in\N_0$,
$t_0\in\{0\}$ that 
\begin{align}\label{c06d}
\epsDown_{0,m}(n)= \left(\E\!\left[
\left\lvert
\left(f
\circ 
V_{n,m}^0\right)(0, 0)
\right\rvert^2\right]\right)^{1/2}
\end{align}
and
\begin{align}
&
\epsDown_{k,m}(n)=
\left(\int_{t_0}^1\int_{t_1}^{1}\ldots\int_{t_{k-1}}^{1}
\E\!\left[
\left\lvert
\left(f
\circ 
V_{n,m}^0\right)(t_k, W^0_{t_k})
\prod_{j=1}^{k}
\tfrac{f(W_{t_j}^0-W^0_{t_{j-1}})}{\sqrt{\varrho(t_{j-1},t_j)}(t_{j}-t_{j-1})}
\right\rvert^2\right]
dt_{k}dt_{k-1}\cdots dt_{1}
\right)^{1/2},
\label{c06c}\end{align}
let 
$\widehat{W}^\theta\colon [0,1]\times\Omega\to\R^d$, $\theta\in\Theta$, satisfy for all 
$\theta\in\Theta$ that $\widehat{W}^\theta=-W^\theta$, and 
let $\widehat{V}^\theta_{n,m}\colon [0,1)\times\R^d\times\Omega\to \R^d$,
$n,m\in\Z$, $\theta\in\Theta$, satisfy 
for all $  n,m \in \N$, $ \theta \in \Theta $,
$ t\in [0,1)$,
$x \in \R^d$
that $
{\widehat{V}}_{-1,m}^{\theta}(t,x)=\widehat{V}_{0,m}^{\theta}(t,x)=0$ and
\begin{equation}  \begin{split}
  &{\widehat{V}}_{n,m}^{\theta}(t,x) 
  =  
  \sum_{i=1}^{m^n} \tfrac{g\bigl(x+\widehat{W}^{(\theta,0,-i)}_1-\widehat{W}^{(\theta,0,-i)}_t\bigr)-g(x)}{m^n}  
   \tfrac{ 
  \widehat{W}^{(\theta, 0, -i)}_{1}- \widehat{W}^{(\theta, 0, -i)}_{t}
  }{ 1 - t }
  \\
  &+\sum_{\ell =0}^{n-1}\sum_{i=1}^{m^{n-\ell }}
   \tfrac{
\left[\left( f\circ 
  \widehat{V}_{\ell ,m}^{(\theta,\ell ,i)}\right)-\1_{\N}(\ell )\left (f\circ  \widehat{V}_{\ell -1,m}^{(\theta,-\ell ,i)}\right)\right]\!
  \left(\mathcal{R}^{(\theta, \ell ,i)}_t,x+\widehat{W}_{\mathcal{R}^{(\theta, \ell ,i)}_t}^{(\theta,\ell ,i)}-\widehat{W}_t^{(\theta,\ell ,i)}\right)
  }
  {m^{n-\ell }\varrho(t,\mathcal{R}^{(\theta, \ell ,i)}_t)}
   \tfrac{ 
  \widehat{W}_{\mathcal{R}^{(\theta, \ell ,i)}_t}^{(\theta,\ell ,i)}- \widehat{W}^{(\theta, \ell , i)}_{t}
  }{ \mathcal{R}^{(\theta, \ell ,i)}_t-t}.
\end{split} \label{d03}    \end{equation}
Next, the fact that
$\forall\,x\in\R^d\colon g(x)=g(-x)$ (see~\eqref{t19}), the fact that
$\forall\,x\in\R^d\colon f(x)=f(-x)$ (see~\eqref{t19}),
 and the fact that 
$\forall\,\theta\in\Theta\colon \widehat{W}^\theta=-W^\theta$ show that for all
$k\in\N_0$, $\theta\in\Theta$, 
$ (t_j)_{j\in[0,k]\cap\Z}\in [0,1)^{k+1} $
with $\forall\, j\in[0,k)\colon t_j<t_{j+1}$
it holds
that
\begin{align}\begin{split}
&
\left[
g\bigl(W^0_{t_k} + W^{\theta}_1-W^{\theta}_{t_k}\bigr)-g(W_{t_k}^0)\right]
\tfrac{  W^{\theta}_{1}- W^{\theta}_{t_k}}{1-t_k}
\prod_{j=1}^{k} \tfrac{f\!\left({W}^0_{t_{j}}-{W}^0_{t_{j-1}}\right)}{\sqrt{\varrho(t_{j-1},t_j)}(t_{j}-t_{j-1})}\\
&
=-
\left[
g\bigl(\widehat{W}^0_{t_k} + \widehat{W}^{\theta}_1-\widehat{W}^{\theta}_{t_k}\bigr)-g(\widehat{W}_{t_k}^0)\right]
\tfrac{  \widehat{W}^{\theta}_{1}- \widehat{W}^{\theta}_{t_k}}{1-t_k}
\prod_{j=1}^{k} \tfrac{f\!\left(\widehat{W}^0_{t_{j}}-\widehat{W}^0_{t_{j-1}}\right)}{\sqrt{\varrho(t_{j-1},t_j)}(t_{j}-t_{j-1})}
.\end{split}
\end{align}
This and the fact that $(\widehat{W}^\theta)_{\theta\in\Theta}$ and 
 $({W}^\theta)_{\theta\in\Theta}$ are identically distributed
 show  that for all
$k\in\N_0$,
 $\theta\in\Theta$,
$ (t_j)_{j\in[0,k]\cap\Z}\in [0,1)^{k+1} $
with $\forall\, j\in[0,k)\colon t_j<t_{j+1}$
 it holds
that
\begin{align}
\E\!\left[
\left[
g\bigl(W^0_{t_k} + W^{\theta}_1-W^{\theta}_{t_k}\bigr)-g(W_{t_k}^0)\right]
\tfrac{  W^{\theta}_{1}- W^{\theta}_{t_k}}{1-t_k}
\prod_{j=1}^{k} \tfrac{f\!\left({W}^0_{t_{j}}-{W}^0_{t_{j-1}}\right)}{\sqrt{\varrho(t_{j-1},t_j)}(t_{j}-t_{j-1})}\right]
=0.\label{t11}
\end{align}
Next, we will prove
by induction on $n\in\N_0$
 that for all
$n\in\N_0$, $m\in\N$, $\theta\in \Theta$, 
$t\in[0,1)$, $x\in\R^d$
it holds that
${V}_{n,m}^\theta(t,x)=- \widehat{V}_{n,m}^\theta(t,-x)$. 
The fact that
$\forall\,m\in \N, \theta\in\Theta\colon V^\theta_{0,m}=
0= \widehat{V}^\theta_{0,m}$ shows the base case $n=0$.
For the induction step $\N_0\ni n-1\mapsto n\in\N$ let 
$n\in\N$ satisfy for all 
$\ell\in [0,n-1]\cap\Z$,
 $m\in\N$, $\theta\in \Theta$, 
$t\in[0,1)$, $x\in\R^d$
that
$\widehat{V}_{\ell,m}^\theta(t,x)=- {V}_{\ell,m}^\theta(t,-x)$. 
This, \eqref{c10}, 
the fact that
$\forall\,x\in\R^d\colon g(x)=g(-x)$ (see \eqref{t19}), the fact that
$\forall\,x\in\R^d\colon f(x)=f(-x)$ (see \eqref{t19}),
the fact that $\forall\,\theta\in\Theta\colon\widehat{W}^\theta=-W^\theta$, and 
 \eqref{d03}
 show for all $m\in\N$, 
$\theta\in \Theta$, 
$t\in[0,1)$, $x\in\R^d$  that
\begin{align}  
  &{V}_{n,m}^{\theta}
(t,x)
  =  
  \sum_{i=1}^{m^n} \tfrac{g\bigl(x+W^{(\theta,0,-i)}_1-W^{(\theta,0,-i)}_t\bigr)-g(x)}{m^n}     \tfrac{ 
  W^{(\theta, 0, -i)}_{1}- W^{(\theta, 0, -i)}_{t}
  }{ 1 - t }
\nonumber  \\
  & \quad+\sum_{\ell =0}^{n-1}\sum_{i=1}^{m^{n-\ell }}
   \tfrac{\left[
\left(f \circ 
  {V}_{\ell ,m}^{(\theta,\ell ,i)}\right)-\1_{\N}(\ell ) \left(f\circ {V}_{\ell -1,m}^{(\theta,-\ell ,i)}\right)\right]\!
  \left(\mathcal{R}^{(\theta, \ell ,i)}_t,\, x+W_{\mathcal{R}^{(\theta, \ell ,i)}_t}^{(\theta,\ell ,i)}-W_t^{(\theta,\ell ,i)}\right)
  }
  {m^{n-\ell }\varrho(t,\mathcal{R}^{(\theta, \ell ,i)}_t)}
   \tfrac{ 
  W_{\mathcal{R}^{(\theta, \ell ,i)}_t}^{(\theta,\ell ,i)}- W^{(\theta, \ell , i)}_{t}
  }{ \mathcal{R}^{(\theta, \ell ,i)}_t-t}\nonumber\\
&=
\sum_{i=1}^{m^n} \tfrac{g\bigl( -x+\widehat{W}^{(\theta,0,-i)}_1-\widehat{W}^{(\theta,0,-i)}_t\bigr)-g( -x)}{m^n}  
  \left[- \tfrac{ 
  \widehat{W}^{(\theta, 0, -i)}_{1}- \widehat{W}^{(\theta, 0, -i)}_{t}
  }{ 1 - t }\right]\nonumber
  \\
  &\quad +\sum_{\ell =0}^{n-1}\sum_{i=1}^{m^{n-\ell }}
   \tfrac{
\left[\left(f\circ
  {\widehat{V}}_{\ell ,m}^{(\theta,\ell ,i)}\right)-\1_{\N}(\ell ) \left(f\circ {\widehat{V}}_{\ell -1,m}^{(\theta,-\ell ,i)}\right)\right]\!
  \left(\mathcal{R}^{(\theta, \ell ,i)}_t,\, -x+\widehat{W}_{\mathcal{R}^{(\theta, \ell ,i)}_t}^{(\theta,\ell ,i)}-\widehat{W}_t^{(\theta,\ell ,i)}\right)
  }
  {m^{n-\ell }\varrho(t,\mathcal{R}^{(\theta, \ell ,i)}_t)}
\left[-
   \tfrac{ 
  \widehat{W}_{\mathcal{R}^{(\theta, \ell ,i)}_t}^{(\theta,\ell ,i)}- \widehat{W}^{(\theta, \ell , i)}_{t}
  }{ \mathcal{R}^{(\theta, \ell ,i)}_t-t}\right]\nonumber\\
&=-\widehat{V}_{n,m}^{\theta}(t,-x).
   \end{align}
This completes the induction step. Induction hence proves
for all
$n\in\N_0$, $m\in\N$, $\theta\in \Theta$, 
$t\in[0,1)$, $x\in\R^d$ that
\begin{align}\label{d05}
{V}_{n,m}^\theta
(t,x)
=- \widehat{V}_{n,m}^\theta(t,-x). 
\end{align}
This and the fact that
$\forall\, x \in\R^d\colon  
f(x)=f(-x)$ (see \eqref{t19}) show that for all
$k,\ell\in\N_0$,
$m\in\N$, $\nu,\eta\in \Theta$, $ (t_j)_{j\in[0,k]\cap\Z}\in [0,1)^{k+1} $
with $\forall\, j\in[0,k)\colon t_j<t_{j+1}$ it holds that
\begin{align}\begin{split}
&
   \tfrac{
\left(f\circ 
  {V}_{\ell ,m}^{\nu}\right)
  \bigl(\uniform_{t_k}^\eta,W_{t_k}^0+W_{\uniform_{t_k}^\eta}^{\eta}-W_{t_k}^{\eta}\bigr)
  }
  {\varrho({t_k},\uniform_{t_k}^\eta)}
   \tfrac{ 
  W_{\uniform_{t_k}^\eta}^{\eta}- W^{\eta}_{t_k}
  }{ \uniform_{t_k}^\eta-{t_k}}
\prod_{j=1}^{k} \tfrac{f\!\left(W^0_{t_{j}}-W^0_{t_{j-1}}\right)}{\sqrt{\varrho(t_{j-1},t_j)}(t_{j}-t_{j-1})}
\\
&
=-
  \tfrac{
\left(f\circ
  \widehat{V}_{\ell ,m}^{\nu}\right)
  \bigl(\uniform_{t_k}^\eta,\widehat{W}_{t_k}^0+\widehat{W}_{\uniform_{t_k}^\eta}^{\eta}-\widehat{W}_{t_k}^{\eta}\bigr)
  }
  {\varrho({t_k},\uniform_{t_k}^\eta)}
   \tfrac{ \widehat{W}_{\uniform_{t_k}^\eta}^{\eta}- \widehat{W}^{\eta}_{t_k}
  }{ \uniform_{t_k}^\eta-{t_k}}
\prod_{j=1}^{k} \tfrac{f\!\left(\widehat{W}^0_{t_{j}}-\widehat{W}^0_{t_{j-1}}\right)}{\sqrt{\varrho(t_{j-1},t_j)}(t_{j}-t_{j-1})}
.\label{d06}\end{split}
\end{align}
This and the fact that $(V_{m,n}^\theta, W^\theta,
\unif^\theta  )_{\theta\in\Theta,m\in\N,n\in\N_0}$ and 
$(\widehat{V}_{m,n}^\theta, \widehat{W}^\theta,\unif^\theta )_{\theta\in\Theta,m\in\N,n\in\N_0}$ are identically distributed
show
(for existence of the expectation cf.\ \cite[Lemma 3.3]{hutzenthaler2021overcoming})
 that for all
 $k,\ell\in\N_0$,
$m\in\N$, $\nu,\eta\in \Theta$, $ (t_j)_{j\in[0,k]\cap\Z}\in [0,1)^{k+1} $
with $\forall\, j\in[0,k)\colon t_j<t_{j+1}$ it holds that
\begin{align}\label{d08}
\E\!\left[  \tfrac{
\left(f\circ 
  {V}_{\ell ,m}^{\nu}\right)
  \bigl(\uniform_{t_k}^\eta,W_{t_k}^0+W_{\uniform_{t_k}^\eta}^{\eta}-W_{t_k}^{\eta}\bigr)
  }
  {\varrho({t_k},\uniform_{t_k}^\eta)}
   \tfrac{ 
  W_{\uniform_{t_k}^\eta}^{\eta}- W^{\eta}_{t_k}
  }{ \uniform_{t_k}^\eta-{t_k}}
\prod_{j=1}^{k} \tfrac{f\!\left(W^0_{t_{j}}-W^0_{t_{j-1}}\right)}{\sqrt{\varrho(t_{j-1},t_j)}(t_{j}-t_{j-1})}\right]
=0.
\end{align}
Next, \eqref{t19}, the fact that
$\forall\, s,t\in[0,1]\colon  \E\! \left[\lvert W_{1,s}^0-W_{1,t}^0 \rvert^2\right]=\lvert s-t\rvert $, and the fact that
$\forall\, t\in[0,1), s\in (t,1]\colon \varrho (t,s)(s-t)= \frac{1}{2\sqrt{1-t}\sqrt{s-t}}(s-t)= \frac{\sqrt{s-t}}{2\sqrt{1-t}}$
show for all $t\in[0,1)$, $s\in (t,1]$
that
\begin{align}
\begin{split}
&
\E\!\left[
\left\lvert
 \tfrac{f(W_{ s}^0-W^0_{ t})}{\sqrt{\varrho( t, s)}( s- t)}
\right\rvert^2\right]= 
\sum_{i=2}^{d}
 \tfrac{\E\!\left[\left\lvert W_{i, s}^0-W^0_{i, t}\right\rvert^2\right]}{\varrho( t, s)( s- t)^2}
=  
 \tfrac{(d-1)( s- t)}{\varrho( t, s)( s- t)^2}
=  \tfrac{d-1}{\varrho( t, s)( s- t)}= 
2 (d-1) \tfrac{\sqrt{1-t}}{\sqrt{s-t}}\geq 2(d-1)
.
\end{split}\label{t04}
\end{align}
This and the fact that
Brownian motions have independent increments imply that for all
$ (t_j)_{j\in[0,k]\cap\Z}\in [0,1)^{k+1} $
with $\forall\, j\in[0,k)\colon t_j<t_{j+1}$
it holds that
\begin{align}
\E\!\left[
\prod_{j=1}^{k}\left\lvert
\tfrac{f(W_{t_j}^0-W^0_{t_{j-1}})}{\sqrt{\varrho(t_{j-1},t_j)}(t_{j}-t_{j-1})}
\right\rvert^2\right]=
\prod_{j=1}^{k}
\E\!\left[\left\lvert
\tfrac{f(W_{t_j}^0-W^0_{t_{j-1}})}{\sqrt{\varrho(t_{j-1},t_j)}(t_{j}-t_{j-1})}
\right\rvert^2\right]=2^k(d-1)^{k}\prod_{j=1}^{k}\tfrac{\sqrt{1-t_{j-1}}}{\sqrt{t_j-t_{j-1}}}. \label{g01}
\end{align}
Furthermore,  \eqref{t19}, the triangle inequality,
the scaling property of Brownian motions, 
independence of Brownian coordinate processes,
 the fact that
$\E[\lvert W_{1,1}^0\rvert^2 ]=1 $, and
the fact that
$\E\!\left[\lvert f( W^0_1) \rvert^2\right]=\sum_{j=2}^{d}\E[\lvert W_{j,1}^0\rvert^2 ]=d-1$
 show that 
for all 
$t\in [0,1)$ it holds a.s.\ that
\begin{align}\begin{split}
&
\left(
\E \!\left[
\left \lvert 
\tfrac{\left(
g(W_{t}^0+W_1^0-W^0_{t})-g(W_{t}^0)\right) 
f\bigl(  W^{0}_{1}- W^{0}_{t}\bigr)}{ 1 - t }
\right\rvert^2\middle|\F_t\right]\right)^{1/2}\\
&\leq 
\left(\E\!\left[
\left \lvert  \left(W^{0}_{1,1}-W^{0}_{1,t}\right)
\tfrac{ f(
  W^{0}_{1}- W^{0}_{t})}{ 1 - t }
\right \rvert^2\middle|\F_t\right]\right)^{1/2}=
\left(\E\!\left[
\left \lvert  \tfrac{W^{0}_{1,1}-W^{0}_{1,t}}{\sqrt{1-t}}
\tfrac{ f(
  W^{0}_{1}- W^{0}_{t})}{ \sqrt{1 - t }}
\right \rvert^2\right]\right)^{1/2}\\
&=
\left(\E\!\left[\lvert  W^0_{1,1} \rvert^2\right]\right)^{\frac{1}{2}}\left(\E\!\left[\lvert f( W^0_1) \rvert^2\right]\right)^{\frac{1}{2}}=\sqrt{d-1}\leq \sqrt{d}.\end{split}
\end{align}
This, the triangle inequality, the tower property, the Markov property of Brownian motions, and
\eqref{g01}
show that 
for all $n,m\in\N$,
$k\in\N_0$, 
$ (t_j)_{j\in[0,k]\cap\Z}\in [0,1)^{k+1} $
with $\forall\, j\in[0,k)\colon t_j<t_{j+1}$ it holds that
\begin{align}\label{t21}\begin{split}
&
\left(\E\!\left[
\left\lvert
\sum_{i=1}^{m^n}\tfrac{g\bigl(W^0_{t_k}+W^{(0,0,-i)}_1-W^{(0,0,-i)}_{t_k}\bigr)-g(W^0_{t_k})}{m^n}  
  \tfrac{ f\bigl(
  W^{(0, 0, -i)}_{1}- W^{(0, 0, -i)}_{t_k}\bigr)
  }{ 1 - t_k }
\prod_{j=1}^{k}
\tfrac{f(W_{t_j}^0-W^0_{t_{j-1}})}{\sqrt{\varrho(t_{j-1},t_j)}(t_{j}-t_{j-1})}
\right\rvert^2
\right]\right)^{1/2}\\
&\leq  \left(\E\!\left[\E\!\left[
\left\lvert
  \tfrac{\left[g(W^0_{t_k}+W^{0}_1-W^{0}_{t_k})-g(W^0_{t_k})\right] f\bigl(
  W^{0}_{1}- W^{0}_{t_k}\bigr)
  }{ 1 - t_k }\right\rvert^2\middle|\F_{t_k}
\right]
\prod_{j=1}^{k}\left\lvert
\tfrac{f(W_{t_j}^0-W^0_{t_{j-1}})}{\sqrt{\varrho(t_{j-1},t_j)}(t_{j}-t_{j-1})}\right\rvert^2
\right]
\right)^{1/2}\\
&\leq d^{1/2}
 \left(\E\!\left[
\prod_{j=1}^{k}\left\lvert
\tfrac{f(W_{t_j}^0-W^0_{t_{j-1}})}{\sqrt{\varrho(t_{j-1},t_j)}(t_{j}-t_{j-1})}\right\rvert^2
\right]
\right)^{1/2}
= d^{1/2}\left(2^k(d-1)^{k}\prod_{j=1}^{k}\tfrac{\sqrt{1-t_{j-1}}}{\sqrt{t_j-t_{j-1}}}
\right)^{1/2}\\
&\leq  
  d^{\frac{k+1}{2}}
2^{\frac{k}{2}}
\left(\prod_{j=1}^{k}\tfrac{\sqrt{1-t_{j-1}}}{\sqrt{t_j-t_{j-1}}}
\right)^{1/2}.
\end{split}\end{align}
Next, \eqref{d08},
\cref{t01},
%
 independence and distributional properties of MLP approximations (see \cite[Lemma~3.2]{hutzenthaler2021overcoming}), 
the fact that Brownian motions have independent increments,
the disintegration theorem (see, e.g., \cite[Lemma 2.2]{hutzenthaler2020overcoming}),
the fact that
$\forall\,t\in [0,1), h\in C([t,1],[0,\infty))\colon \E \!\left[h(\uniform_t^0)\right]=\int_{t}^{1}h(s)\varrho(t,s)\,ds  $ 
(see \cref{d14}),
 and \eqref{t19}
show that for all $n,m\in\N$,
$\ell\in \{0,1,\ldots,n-1\}$, 
$k\in\N_0$, 
$ (t_j)_{j\in[0,k]\cap\Z}\in [0,1)^{k+1} $
with $\forall\, j\in[0,k)\colon t_j<t_{j+1}$ it holds that
\begin{align}
&\sum_{\nu=2}^{d}
\E \! \left[\left\lvert\sum_{i=1}^{m^{n-\ell }}
 \tfrac{
\left( f\circ 
{V}_{\ell ,m}^{(0,\ell ,i)}\right)
\!
  \bigl(\uniform^{(0, \ell ,i)}_{t_k},W^0_{t_k}+W_{\uniform^{(0, \ell ,i)}_{t_k}}^{(0,\ell ,i)}-W_{t_k}^{(0,\ell ,i)}\bigr)
  }
  {m^{n-\ell }\varrho(t_k,\uniform^{(0, \ell ,i)}_{t_k})}
  \tfrac{ 
  W_{\nu,\uniform^{(0, \ell ,i)}_{t_k}}^{(0,\ell ,i)}- W^{(0, \ell , i)}_{\nu,t_k}
  }{ \uniform^{(0, \ell ,i)}_{t_k}-t_k}
\prod_{j=1}^{k}
\tfrac{f(W_{t_j}^0-W^0_{t_{j-1}})}{\sqrt{\varrho(t_{j-1},t_j)}(t_{j}-t_{j-1})}
\right\rvert^2
\right]\nonumber\\
&= \sum_{\nu=2}^{d}
\var\! \left[\sum_{i=1}^{m^{n-\ell }}
 \tfrac{
\left( f\circ 
{V}_{\ell ,m}^{(0,\ell ,i)}\right)
\!
  \bigl(\uniform^{(0, \ell ,i)}_{t_k},W^0_{t_k}+W_{\uniform^{(0, \ell ,i)}_{t_k}}^{(0,\ell ,i)}-W_{t_k}^{(0,\ell ,i)}\bigr)
  }
  {m^{n-\ell }\varrho(t_k,\uniform^{(0, \ell ,i)}_{t_k})}
  \tfrac{ 
  W_{\nu,\uniform^{(0, \ell ,i)}_{t_k}}^{(0,\ell ,i)}- W^{(0, \ell , i)}_{\nu,t_k}
  }{ \uniform^{(0, \ell ,i)}_{t_k}-t_k}
\prod_{j=1}^{k}
\tfrac{f(W_{t_j}^0-W^0_{t_{j-1}})}{\sqrt{\varrho(t_{j-1},t_j)}(t_{j}-t_{j-1})}\right]\nonumber\\
&\geq \tfrac{m^{n-\ell }}{(m^{n-\ell })^2}\sum_{\nu=2}^{d} 
\var\! \left[
 \tfrac{
\left( f\circ 
{V}_{\ell ,m}^{(0,\ell ,1)}\right)
\!
  \bigl(\uniform^{(0, \ell ,1)}_{t_k},W^0_{t_k}+W_{\uniform^{(0, \ell ,1)}_{t_k}}^{(0,\ell ,1)}-W_{t_k}^{(0,\ell ,1)}\bigr)
  }
  {\varrho(t_k,\uniform^{(0, \ell ,1)}_{t_k})}
  \tfrac{ 
  W_{\nu,\uniform^{(0, \ell ,1)}_{t_k}}^{(0,\ell ,1)}- W^{(0, \ell , 1)}_{\nu,t_k}
  }{ \uniform^{(0, \ell ,1)}_{t_k}-t_k}
\prod_{j=1}^{k}
\tfrac{f(W_{t_j}^0-W^0_{t_{j-1}})}{\sqrt{\varrho(t_{j-1},t_j)}(t_{j}-t_{j-1})}\right]\nonumber\\
&= \tfrac{1}{m^{n-\ell }}\sum_{\nu=2}^{d}
\E\! \left[\left\lvert
 \tfrac{
\left( f\circ 
{V}_{\ell ,m}^{(0,\ell ,1)}\right)
\!
  \bigl(\uniform^{(0, \ell ,1)}_{t_k},W^0_{t_k}+W_{\uniform^{(0, \ell ,1)}_{t_k}}^{(0,\ell ,1)}-W_{t_k}^{(0,\ell ,1)}\bigr)
  }
  {\varrho(t_k,\uniform^{(0, \ell ,1)}_{t_k})}
  \tfrac{ 
  W_{\nu,\uniform^{(0, \ell ,1)}_{t_k}}^{(0,\ell ,1)}- W^{(0, \ell , 1)}_{\nu,t_k}
  }{ \uniform^{(0, \ell ,1)}_{t_k}-t_k}\right\rvert^2
\prod_{j=1}^{k}\left\lvert
\tfrac{f(W_{t_j}^0-W^0_{t_{j-1}})}{\sqrt{\varrho(t_{j-1},t_j)}(t_{j}-t_{j-1})}
\right\rvert^2\right]\nonumber\\
&= \tfrac{1}{m^{n-\ell }}\sum_{\nu=2}^{d}
\E\! \left[\left\lvert
 \tfrac{
\left( f\circ 
{V}_{\ell ,m}^{0}\right)
\!
  \bigl(\uniform^{0}_{t_k},W^0_{t_k}+W_{\uniform^{0}_{t_k}}^{0}-W_{t_k}^{0}\bigr)
  }
  {\varrho(t_k,\uniform^{0}_{t_k})}
  \tfrac{
  W_{\nu,\uniform^{0}_{t_k}}^{0}- W^{0}_{\nu,t_k}
  }{ \uniform^{0}_{t_k}-t_k}\right\rvert^2
\prod_{j=1}^{k}\left\lvert
\tfrac{f(W_{t_j}^0-W^0_{t_{j-1}})}{\sqrt{\varrho(t_{j-1},t_j)}(t_{j}-t_{j-1})}
\right\rvert^2\right]\nonumber\\
&= \tfrac{1}{m^{n-\ell }}
\E\! \left[\left\lvert
 \tfrac{
\left( f\circ 
{V}_{\ell ,m}^{0}\right)
\!
  \bigl(\uniform^{0}_{t_k},W_{\uniform^{0}_{t_k}}^{0}\bigr)
  }
  {\varrho(t_k,\uniform^{0}_{t_k})}
  \tfrac{ f\bigl(
  W_{\uniform^{0}_{t_k}}^{0}- W^{0}_{t_k}\bigr)
  }{ \uniform^{0}_{t_k}-t_k}\right\rvert^2
\prod_{j=1}^{k}\left\lvert
\tfrac{f(W_{t_j}^0-W^0_{t_{j-1}})}{\sqrt{\varrho(t_{j-1},t_j)}(t_{j}-t_{j-1})}
\right\rvert^2\right]\nonumber\\
&= \tfrac{1}{m^{n-\ell}}
\int_{t_k}^{1}\E\left[\left \lvert
  \tfrac{(f\circ  {V}_{\ell,m}^{0})\!
  \left(t_{k+1},W_{t_{k+1}}^{0}\right)
  }
  {\varrho(t_k,t_{k+1})}
\tfrac{f\bigl(
W_{t_{k+1}}^0-W^0_{t_{k}}\bigr)}{t_{k+1}-t_{k}}
 \prod_{j=1}^{k}
\tfrac{f(
W_{t_j}^0-W^0_{t_{j-1}})}{\sqrt{\varrho(t_{j-1},t_j)}(t_{j}-t_{j-1})
}\right \rvert^2\right]
\varrho(t_k,t_{k+1})\,
dt_{k+1}\nonumber
\\
&= \tfrac{1}{m^{n-\ell}}
\int_{t_k}^{1}\E\!\left[\left \lvert
(f\circ {V}_{\ell,m}^{0})\bigl(t_{k+1},W_{t_{k+1}}^{0}\bigr)
 \prod_{j=1}^{k+1}
\tfrac{f(
W_{t_j}^0-W^0_{t_{j-1}})}{\sqrt{\varrho(t_{j-1},t_j)}(t_{j}-t_{j-1})
}
\right \rvert^2\right]
dt_{k+1}.\label{t20}
\end{align}
Next, 
\eqref{t19} shows that
$\lvert f(W_1^0)\rvert^2= \sum_{j=2}^{d}\lvert W_{j,1}^0\rvert^2$ is chi-square distributed with $(d-1)$ degrees of freedom. Hence, 
$\E [\lvert f(W_1^0) \rvert^4]= (d-1)(d+1)\leq d^2 $. This,
 independence and distributional properties of MLP approximations (see \cite[Lemma~3.2]{hutzenthaler2021overcoming}),
  \cref{m01} (applied with
 $\gamma \gets (\R^d\ni (x_1,\ldots,x_d)\mapsto \lvert x_1\rvert\in\R)$  in the notation of \cref{m01}), and the fact that
$\E[\lvert W_{1,1}^0\rvert^4 ]=3 $
 show that for all
$m,n\in \N$,
$t\in [0,1)$ it holds a.s.\
that
\begin{align}
&
\left(
\E\!\left[
\left\lvert
\left(f
\circ 
V_{n,m}^0\right)(t, W^0_{t})
\right\rvert^2\middle|
\F_t\right]
\right)^{1/2}
=
\left(
\E\!\left[
\left\lvert
\left(f
\circ 
V_{n,m}^0\right)(t, \xi)
\right\rvert^2\right]\bigr|_{\xi=W_t^0}
\right)^{1/2}
\nonumber\\
&\leq \left(\E\!\left[\lvert  W^0_{1,1} \rvert^4\right]\right)^{1/4}
 \max\!\left\{\left(\E\!\left[\lvert f( W^0_1) \rvert^4\right]\right)^{{n}/{4}},1\right\}6^{n-1}\leq 3^{1/4}d^{n/2}6^{n-1}\leq 
d^{n/2}6^n
.\label{t22}
\end{align}
This, the triangle inequality, independence and distributional properties of MLP approximations (see \cite[Lemma~3.2]{hutzenthaler2021overcoming}),  
the fact that Brownian motions have independent increments, the disintegration theorem (see, e.g., \cite[Lemma~2.2]{hutzenthaler2020overcoming}),
the fact that
$\forall\,t\in [0,1), h\in C([t,1],[0,\infty))\colon \E \!\left[h(\uniform_t^0)\right]=\int_{t}^{1}h(s)\varrho(t,s)\,ds  $ 
(see \cref{d14}), and \eqref{g01}
show that for all $n,m\in\N$,
$\ell\in \{0,1,\ldots,n-1\}$, 
$\lambda\in \{-\ell,\ell\}$, $\iota\in\{0,1\}$,
$k\in\N_0$, 
$ (t_j)_{j\in[0,k]\cap\Z}\in [0,1)^{k+1} $
with $\forall\, j\in[0,k)\colon t_j<t_{j+1}$ it holds that
\begin{align}\label{t24}
&\left(\E\!\left[
\left\lvert
\sum_{i=1}^{m^{n-\ell }}
 \tfrac{
 \left(f\circ  {V}_{\ell -\iota,m}^{(0,\lambda ,i)}\right)
  \bigl(\uniform^{(0, \ell ,i)}_{t_k},W^0_{t_k}+W_{\uniform^{(0, \ell ,i)}_{t_k}}^{(0,\ell ,i)}-W_{t_k}^{(0,\ell ,i)}\bigr)
  }
  {m^{n-\ell }\varrho(t_k,\uniform^{(0, \ell ,i)}_{t_k})}
  \tfrac{ f\bigl(
  W_{\uniform^{(0, \ell ,i)}_{t_k}}^{(0,\ell ,i)}- W^{(0, \ell , i)}_{t_k}
\bigr)
  }{ \uniform^{(0, \ell ,i)}_{t_k}-t_k}\prod_{j=1}^{k}
\tfrac{f(W_{t_j}^0-W^0_{t_{j-1}})}{\sqrt{\varrho(t_{j-1},t_j)}(t_{j}-t_{j-1})}
\right\rvert^2
\right]\right)^{1/2}\nonumber\\
&\leq 
\left(\E\!\left[
\left\lvert
 \tfrac{
\left(f\circ  {V}_{\ell -\iota,m}^{0}\right)
  \bigl(\uniform^{0}_{t_k},W^0_{t_k}+W_{\uniform^{0}_{t_k}}^{0}-W_{t_k}^{0}\bigr)
  }
  {\varrho(t_k,\uniform^{0}_{t_k})}
  \tfrac{ f\bigl(
  W_{\uniform^{0}_{t_k}}^{0}- W^{0}_{t_k}\bigr)
  }{ \uniform^{0}_{t_k}-t_k}\prod_{j=1}^{k}
\tfrac{f(W_{t_j}^0-W^0_{t_{j-1}})}{\sqrt{\varrho(t_{j-1},t_j)}(t_{j}-t_{j-1})}
\right\rvert^2
\right]\right)^{1/2}\nonumber\\
&= \left(
\int_{t_k}^{1}\E\!\left[\left \lvert
(f\circ {V}_{\ell-\iota,m}^{0})\bigl(t_{k+1},W_{t_{k+1}}^{0}\bigr)
 \prod_{j=1}^{k+1}
\tfrac{f(
W_{t_j}^0-W^0_{t_{j-1}})}{\sqrt{\varrho(t_{j-1},t_j)}(t_{j}-t_{j-1})
}
\right \rvert^2\right]
dt_{k+1}\right)^{1/2}\nonumber\\
&= \left(
\int_{t_k}^{1}\E\!\left[
\E\!\left[
\left \lvert
(f\circ {V}_{\ell-\iota,m}^{0})\bigl(t_{k+1},W_{t_{k+1}}^{0}\bigr)\right\rvert^2
\middle|\F_{t_{k+1}}
\right]
 \prod_{j=1}^{k+1}\left\lvert
\tfrac{f(
W_{t_j}^0-W^0_{t_{j-1}})}{\sqrt{\varrho(t_{j-1},t_j)}(t_{j}-t_{j-1})
}
\right \rvert^2\right]
dt_{k+1}\right)^{1/2}\nonumber\\
&\leq d^{\frac{\ell-\iota}{2}}6^{\ell-\iota} \left(
\int_{t_k}^{1}\E\!\left[
 \prod_{j=1}^{k+1}\left\lvert
\tfrac{f(
W_{t_j}^0-W^0_{t_{j-1}})}{\sqrt{\varrho(t_{j-1},t_j)}(t_{j}-t_{j-1})
}
\right \rvert^2\right]
dt_{k+1}\right)^{1/2}\nonumber\\
&\leq d^{\frac{\ell}{2}}6^{\ell}
2^{\frac{k+1}{2}}d^{\frac{k+1}{2}}\left[
\int_{t_k}^{1}
\prod_{j=1}^{k+1}\tfrac{\sqrt{1-t_{j-1}}}{\sqrt{t_j-t_{j-1}}}
dt_{k+1}\right]^{1/2}= d^{\frac{k+1+\ell}{2}}2^{\frac{k+1}{2}+\ell}3^{\ell}
\left[
\int_{t_k}^{1}
\prod_{j=1}^{k+1}\tfrac{\sqrt{1-t_{j-1}}}{\sqrt{t_j-t_{j-1}}}
dt_{k+1}\right]^{1/2}.
\end{align}
Next, observe for all
$x,y\in\R$ that
$
(x+y)^2=   x^2+y^2+2 \frac{x}{\sqrt{2}}\sqrt{2}y
\geq  x^2+y^2- \frac{x^2}{2} -2 y^2=
\frac{x^2}{2}-y^2.
$
This shows for all 
$x,y\in\R$ that
$(x+y)^2+y^2\geq \frac{x^2}{2}$.
This, \eqref{t19}, and the triangle inequality show that for all random variables $X=(X_1,X_2,\ldots,X_d),Y=(Y_1,Y_2,\ldots,Y_d)\colon \Omega\to\R^d$ with 
$\sum_{\nu=1}^{d}\E [\lvert X_\nu\rvert^2+\lvert Y_\nu\rvert^2]<\infty$ it holds
 that
\begin{align}
&
\left(\E \!\left[\lvert f(X+Y)\rvert^2\right]\right)^{1/2}
=\left(\sum_{\nu=2}^{d} \E\!\left[(X_\nu+Y_\nu)^2\right] \right)^{1/2}
+\left(\sum_{\nu=2}^{d} \E\!\left[\lvert Y_\nu\rvert^2\right] \right)^{1/2}
-\left(\sum_{\nu=2}^{d} \E\!\left[\lvert Y_\nu\rvert^2\right] \right)^{1/2}\nonumber\\
&\geq 
\left(\sum_{\nu=2}^{d} \E\!\left[(X_\nu+Y_\nu)^2\right] + \E\!\left[\lvert Y_\nu\rvert^2\right] \right)^{1/2}
-\left(\sum_{\nu=2}^{d} \E\!\left[\lvert Y_\nu\rvert^2\right] \right)^{1/2}\nonumber\\
&\geq 
\left(\sum_{\nu=2}^{d} \frac{\E\!\left[\lvert X_\nu\rvert^2\right]}{2} \right)^{1/2}-\left(\sum_{\nu=2}^{d} \E\!\left[\lvert Y_\nu\rvert^2\right] \right)^{1/2}=
\left(\frac{1}{2}\sum_{\nu=2}^{d}  \E\!\left[\lvert X_\nu\rvert^2\right]\right)^{1/2}-
\left( \E \bigl[\lvert f(Y)\rvert^2\bigr]\right)^{1/2}.\label{t17}
\end{align}
Furthermore, \eqref{c10} and the fact that $\forall\, m\in\N, \theta\in\Theta\colon f\circ V_{0,m}^\theta=0$ imply for all
$  n,m \in \N$, 
$ t\in [0,1)$,
$x \in \R^d$
that
\begin{equation}  \begin{split}
  &{V}_{n,m}^{0}(t,x) 
=\left[
\sum_{i=1}^{m^{n-\ell }}
 \tfrac{
\left( f\circ 
  {V}_{\ell ,m}^{(0,\ell ,i)}\right)
\!
  \left(\uniform^{(0, \ell ,i)}_t,x+W_{\uniform^{(0, \ell ,i)}_t}^{(0,\ell ,i)}-W_t^{(0,\ell ,i)}\right)
  }
  {m^{n-\ell }\varrho(t,\uniform^{(0, \ell ,i)}_t)}
  \tfrac{ 
  W_{\uniform^{(0, \ell ,i)}_t}^{(0,\ell ,i)}- W^{(0, \ell , i)}_{t}
  }{ \uniform^{(0, \ell ,i)}_t-t}\right]\Biggr|_{\ell=n-1}\\
&\qquad + \sum_{i=1}^{m^n}\tfrac{g\bigl(x+W^{(0,0,-i)}_1-W^{(0,0,-i)}_t\bigr)-g(x)}{m^n}  
  \tfrac{ 
  W^{(0, 0, -i)}_{1}- W^{(0, 0, -i)}_{t}
  }{ 1 - t }\\
&\qquad-
\left[\sum_{i=1}^{m^{n-\ell }}
 \tfrac{
\1_{\N}(\ell ) \left(f\circ  {V}_{\ell -1,m}^{(0,-\ell ,i)}\right)
\!
  \left(\uniform^{(0, \ell ,i)}_t,x+W_{\uniform^{(0, \ell ,i)}_t}^{(0,\ell ,i)}-W_t^{(0,\ell ,i)}\right)
  }
  {m^{n-\ell }\varrho(t,\uniform^{(0, \ell ,i)}_t)}
  \tfrac{ 
  W_{\uniform^{(0, \ell ,i)}_t}^{(0,\ell ,i)}- W^{(0, \ell , i)}_{t}
  }{ \uniform^{(0, \ell ,i)}_t-t}\right]\Biggr|_{\ell=n-1}
  \\ &\qquad+\sum_{\ell =1}^{n-2}\sum_{i=1}^{m^{n-\ell }}
 \tfrac{
\left[\left( f\circ 
  {V}_{\ell ,m}^{(0,\ell ,i)}\right)-\1_{\N}(\ell ) \left(f\circ  {V}_{\ell -1,m}^{(0,-\ell ,i)}\right)\right]
\!
  \left(\uniform^{(0, \ell ,i)}_t,x+W_{\uniform^{(0, \ell ,i)}_t}^{(0,\ell ,i)}-W_t^{(0,\ell ,i)}\right)
  }
  {m^{n-\ell }\varrho(t,\uniform^{(0, \ell ,i)}_t)}
  \tfrac{ 
  W_{\uniform^{(0, \ell ,i)}_t}^{(0,\ell ,i)}- W^{(0, \ell , i)}_{t}
  }{ \uniform^{(0, \ell ,i)}_t-t}.
\end{split}     \end{equation}
This, \eqref{t17}, \eqref{t20}, \eqref{t21}, \eqref{t24}, and the triangle inequality (recall \eqref{t19}) show that for all
$  n,m \in \N$,
$k\in\N_0$,
$ (t_j)_{j\in[0,k]\cap\Z}\in [0,1)^{k+1} $
with $\forall\, j\in[0,k)\colon t_j<t_{j+1}$ it holds that
\begin{align}
&\left(\E\! \left[\left\lvert
\bigl(f\circ {V}_{n,m}^{0})(t_k,W^0_{t_k}) \right\rvert^2
\prod_{j=1}^{k}\left\lvert
\tfrac{f(W_{t_j}^0-W^0_{t_{j-1}})}{\sqrt{\varrho(t_{j-1},t_j)}(t_{j}-t_{j-1})}
\right\rvert^2
\right]\right)^{1/2}\nonumber\\
&\geq \vastleft{25pt}(\tfrac{1}{2}\sum_{\nu=2}^{d}
\E  \vastleft{25pt}[\vastleft{25pt}\lvert\sum_{i=1}^{m^{n-\ell }}
 \tfrac{
\left( f\circ 
{V}_{\ell ,m}^{(0,\ell ,i)}\right)
\!
  \bigl(\uniform^{(0, \ell ,i)}_{t_k},W^0_{t_k}+W_{\uniform^{(0, \ell ,i)}_{t_k}}^{(0,\ell ,i)}-W_{t_k}^{(0,\ell ,i)}\bigr)
  }
  {m^{n-\ell }\varrho(t_k,\uniform^{(0, \ell ,i)}_{t_k})}
\tfrac{
  W_{\nu,\uniform^{(0, \ell ,i)}_{t_k}}^{(0,\ell ,i)}- W^{(0, \ell , i)}_{\nu,t_k}
  }{ \uniform^{(0, \ell ,i)}_{t_k}-t_k}\nonumber\\
&\qquad\qquad\qquad\qquad\qquad\qquad\qquad\qquad\qquad\qquad\qquad
\prod_{j=1}^{k}
\tfrac{f(W_{t_j}^0-W^0_{t_{j-1}})}{\sqrt{\varrho(t_{j-1},t_j)}(t_{j}-t_{j-1})}
\vastright{25pt}\rvert^2
\vastright{25pt}]
 \vastright{25pt})^{1/2}\Biggr|_{\ell=n-1}\nonumber\\
&-\left(\E\!\left[
\left\lvert
\sum_{i=1}^{m^n}\tfrac{g\bigl(W^0_{t_k}+W^{(0,0,-i)}_1-W^{(0,0,-i)}_{t_k}\bigr)-g(W^0_{t_k})}{m^n}  
  \tfrac{ f\bigl(
  W^{(0, 0, -i)}_{1}- W^{(0, 0, -i)}_{t_k}\bigr)
  }{ 1 - t_k }
\prod_{j=1}^{k}
\tfrac{f(W_{t_j}^0-W^0_{t_{j-1}})}{\sqrt{\varrho(t_{j-1},t_j)}(t_{j}-t_{j-1})}
\right\rvert^2
\right]\right)^{1/2}\nonumber\\
&-\sum_{\ell=1}^{n-2}
\vastleft{25pt}(
\E\! \vastleft{25pt}[
\vastleft{25pt}\lvert
\sum_{i=1}^{m^{n-\ell }}
 \tfrac{
 \left(f\circ  {V}_{\ell ,m}^{(0,\ell ,i)}\right)
  \bigl(\uniform^{(0, \ell ,i)}_{t_k},W^0_{t_k}+W_{\uniform^{(0, \ell ,i)}_{t_k}}^{(0,\ell ,i)}-W_{t_k}^{(0,\ell ,i)}\bigr)
  }
  {m^{n-\ell }\varrho(t_k,\uniform^{(0, \ell ,i)}_{t_k})}
  \tfrac{ f\bigl(
  W_{\uniform^{(0, \ell ,i)}_{t_k}}^{(0,\ell ,i)}- W^{(0, \ell , i)}_{t_k}
\bigr)  }{ \uniform^{(0, \ell ,i)}_{t_k}-t_k}
\nonumber\\
&\qquad\qquad\qquad\qquad\qquad\qquad\qquad\qquad\qquad\qquad\qquad
\prod_{j=1}^{k}
\tfrac{f(W_{t_j}^0-W^0_{t_{j-1}})}{\sqrt{\varrho(t_{j-1},t_j)}(t_{j}-t_{j-1})}
\vastright{25pt}\rvert^2
\vastright{25pt}]\vastright{25pt})^{1/2}\nonumber\\
&-\sum_{\ell=1}^{n-1}\vastleft{25pt}(\E\!\vastleft{25pt}[\vastleft{25pt}\lvert
\sum_{i=1}^{m^{n-\ell }}
 \tfrac{
\left(f\circ  {V}_{\ell -1,m}^{(0,-\ell ,i)}\right)
  \bigl(\uniform^{(0, \ell ,i)}_{t_k},W^0_{t_k}+W_{\uniform^{(0, \ell ,i)}_{t_k}}^{(0,\ell ,i)}-W_{t_k}^{(0,\ell ,i)}\bigr)
  }
  {m^{n-\ell }\varrho(t_k,\uniform^{(0, \ell ,i)}_{t_k})}
  \tfrac{ f\bigl(
  W_{\uniform^{(0, \ell ,i)}_{t_k}}^{(0,\ell ,i)}- W^{(0, \ell , i)}_{t_k}
\bigr)
  }{ \uniform^{(0, \ell ,i)}_{t_k}-t_k}\nonumber\\
&\qquad\qquad\qquad\qquad\qquad\qquad\qquad\qquad\qquad\qquad\qquad
\prod_{j=1}^{k}
\tfrac{f(W_{t_j}^0-W^0_{t_{j-1}})}{\sqrt{\varrho(t_{j-1},t_j)}(t_{j}-t_{j-1})}
\vastright{25pt}\rvert^2
\vastright{25pt}]\vastright{25pt})^{1/2}\nonumber\\
&\geq \left(\tfrac{1}{2m}
\int_{t_k}^{1}\E\!\left[\left \lvert
(f\circ {V}_{n-1,m}^{0})\bigl(t_{k+1},W_{t_{k+1}}^{0}\bigr)
 \prod_{j=1}^{k+1}
\tfrac{f(
W_{t_j}^0-W^0_{t_{j-1}})}{\sqrt{\varrho(t_{j-1},t_j)}(t_{j}-t_{j-1})
}
\right \rvert^2\right]
dt_{k+1}\right)^{1/2}\nonumber\\
&\quad - d^{\frac{k+1}{2}}
2^{\frac{k}{2}}
\left(\prod_{j=1}^{k}\tfrac{\sqrt{1-t_{j-1}}}{\sqrt{t_j-t_{j-1}}}
\right)^{1/2}-2\sum_{\ell=1}^{n-2}\left[ d^{\frac{k+1+\ell}{2}}2^{\frac{k+1}{2}+\ell}3^{\ell}\left[
\int_{t_k}^{1}
\prod_{j=1}^{k+1}\tfrac{\sqrt{1-t_{j-1}}}{\sqrt{t_j-t_{j-1}}}
dt_{k+1}\right]^{1/2}\right].\label{k01}
\end{align}
Next, the triangle inequality implies that for all measure spaces $(E,\mathcal{E},\nu)$, all $n\in\N$, and all 
 measurable functions $h_0,h_1,\ldots,h_n\colon E\to [0,\infty)$
with $\sum_{\ell=0}^{n}\int_{E}\lvert h_\ell\rvert^2\,d\nu <\infty$
and $h_0\geq h_1-\sum_{\ell=2}^{n}h_\ell $
it holds that
$(\int_{E}\lvert h_0\rvert^2\,d\nu)^{1/2}\geq (\int_{E}\lvert h_1\rvert^2\,d\nu)^{1/2}
-\sum_{\ell=2}^{n}(\int_{E}\lvert h_\ell\rvert^2\,d\nu)^{1/2}
$.
This, \eqref{c06d}, \eqref{c06c}, \eqref{k01},  and \cref{c02} imply for all $m,n\in\N$, $k\in\N_0$ that
\begin{align}\begin{split}
&\epsDown_{k,m}(n)\geq \frac{1}{\sqrt{2m}}
\epsDown_{k+1,m}(n-1)
- d^{\frac{k+1}{2}}2^{\frac{k}{2}} 2^{\frac{k}{2}}
-2\sum_{\ell=1}^{n-2}\left[ d^{\frac{k+1+\ell}{2}}2^{\frac{k+1}{2}+\ell}3^{\ell}2^{\frac{k+1}{2}}\right]
\\
&\geq \frac{1}{\sqrt{2m}}
\epsDown_{k+1,m}(n-1)
-2d^{\frac{k+1+(n-2)}{2}}
2^{k+1+(n-2)}
\sum_{\ell=0}^{n-2}3^{\ell}
\\
&=
\frac{1}{\sqrt{2m}}
\epsDown_{k+1,m}(n-1)
-2d^{\frac{k+n-1}{2}}
2^{k+n-1}\frac{3^{n-1}-1}{2}\geq 
\frac{1}{\sqrt{2m}}
\epsDown_{k+1,m}(n-1)
-d^{\frac{k+n-1}{2}}
2^{k}6^{n-1}.
\end{split}\label{t30}\end{align}
This shows
for all $m,n\in\N$,  $\ell\in \{0,1,\ldots,n-1\}$  that
\begin{align}
&
\left(\tfrac{1}{\sqrt{2m}}\right)^\ell\epsDown_{\ell,m}(n-\ell)
-\sum_{j=0}^{\ell-1}\left[
\left(\tfrac{1}{\sqrt{2m}}\right)^jd^{\frac{n-1}{2}}
2^{j}6^{n-j-1} \right]\nonumber\\
&
\geq 
\left(\tfrac{1}{\sqrt{2m}}\right)^\ell
\left[\tfrac{1}{\sqrt{2m}}
\epsDown_{\ell+1,m}(n-\ell-1)
-
d^{\frac{\ell+n-\ell-1}{2}}2^{\ell}6^{n-\ell-1}
\right]
-\sum_{j=0}^{\ell-1}\left[
\left(\tfrac{1}{\sqrt{2m}}\right)^jd^{\frac{n-1}{2}}
2^{j}6^{n-j-1} \right]
\nonumber \\
&=\left(\tfrac{1}{\sqrt{2m}}\right)^{\ell+1}
\epsDown_{\ell+1,m}(n-\ell-1)-
\sum_{j=0}^{\ell}\left[
\left(\tfrac{1}{\sqrt{2m}}\right)^jd^{\frac{n-1}{2}}
2^{j}6^{n-j-1} \right].\label{t31}
\end{align}
This and induction  imply for all $m,n\in\N$ that
\begin{align}\begin{split}
&\epsDown_{0,m}(n)\geq 
\left(\tfrac{1}{\sqrt{2m}}\right)^{n-1}\epsDown_{n-1,m}(1)
-\sum_{j=0}^{n-2}\left[
\left(\tfrac{1}{\sqrt{2m}}\right)^jd^{\frac{n-1}{2}}
2^{j}6^{n-j-1} \right]\\
&= \left(\tfrac{1}{\sqrt{2m}}\right)^{n-1}\epsDown_{n-1,m}(1)-
d^{\frac{n-1}{2}}6^{n-1}
\sum_{j=0}^{n-2}\left(\tfrac{1}{3\sqrt{2m}}\right)^j
\geq \left(\tfrac{1}{\sqrt{2m}}\right)^{n-1}\epsDown_{n-1,m}(1)-
d^{\frac{n-1}{2}}6^{n-1}
\sum_{j=0}^{\infty}\left(\tfrac{1}{3}\right)^j
\\
&
=  \left(\tfrac{1}{\sqrt{2m}}\right)^{n-1}
\epsDown_{n-1,m}(1)-
d^{\frac{n-1}{2}}6^{n}\frac{1}{6(1-\frac{1}{3})}
\geq
\left(\tfrac{1}{\sqrt{2m}}\right)^{n-1}
\epsDown_{n-1,m}(1)-d^{\frac{n-1}{2}}6^n.
\end{split}\label{t31b}\end{align}
Furthermore, \eqref{t19}, the fact that $\forall\,B\in \mathcal{B}(\R^d),s\in[0,1]\colon \P (W_s\in B)= \P (-W_s\in B)$, the triangle inequality, 
the fact that the coordinates of Brownian motions are i.i.d.,
 Jensen's inequality, and the fact that
$\forall\,s\in[0,1]\colon \E \bigl[\lvert W_{1,s}^0\rvert^2\bigr]=s$
 show for all $x=(x_1,x_2,\ldots,x_d)\in\R^d$, $s\in[0,1]$ that
\begin{align}\begin{split}
&
\E \!\left[ (
g(x+W_s^0)-g(x))^2\left\lvert f( W_{s}^0)\right\rvert^2
\right]
= \sum_{j=2}^{d} \E \!\left[\Bigl(
\left\lvert x_1 +W_{1,s}^0\right\rvert -\left\lvert x_1 \right\rvert  \Bigr)^2
\left\lvert W_{j,s}^0\right\rvert^2
\right]\\
&= \sum_{j=2}^{d}\E \!\left[
\Bigl(
\left\lvert x_1 +W_{1,s}^0\right\rvert^2 +\left\lvert x_1 \right\rvert^2-2  \left\lvert x_1 +W_{1,s}^0\right\rvert \left\lvert x_1 \right\rvert \Bigr)
\right]
\E\!\left[\left\lvert W_{j,s}^0\right\rvert^2\right]
\\
&\geq (d-1)
\E \!\left[
\Bigl(
\left\lvert x_1\right\rvert^2 + \left\lvert W_{1,s}^0\right\rvert^2
+2x_1W_{1,s}^0 +\left\lvert x_1 \right\rvert^2-
2  \left(\lvert x_1\rvert +\lvert W_{1,s}^0\rvert \right)
\left\lvert x_1\right\rvert 
\Bigr)
\right]
\E\!\left[\left\lvert W_{1,s}^0\right\rvert^2\right]\\
&= (d-1)
\E \!\left[
 \left\lvert W_{1,s}^0\right\rvert^2
-
2  \left\lvert W_{1,s}^0\right\rvert
\left\lvert x_1\right\rvert 
\right]\E\!\left[
\left\lvert W_{1,s}^0\right\rvert^2\right]\\
&\geq (d-1)
 \biggl[
\E \!\left[
 \left\lvert W_{1,s}^0\right\rvert^2\right]
-
2 \left(\E\!\left[ \left\lvert W_{1,s}^0\right\rvert^2\right]\right)^{1/2}
\left\lvert x_1\right\rvert 
\biggr]
\E\!\left[
\left\lvert W_{1,s}^0\right\rvert^2\right]\\
&=   (d-1)
\left(s-2\sqrt{s}\lvert x_1\rvert\right)s
=(d-1) \left(s^2-2\lvert x_1\rvert s^{3/2}\right).
\end{split}\label{t07}\end{align}
Next, Jensen's inequality, the fact that
$\forall\,t\in [0,1]\colon \E\! \left[\lvert W_{1,t}^0 \rvert^2\right]=t $, and the fact that
$[0,1)\ni t\mapsto\frac{t}{1-t}\in\R$ is 
non-decreasing show for all
 $t\in[0,\frac{1}{17}]$ that
\begin{align}
1- \frac{2\E\! \left[\lvert W_{1,t}^0 \rvert\right]
}{\sqrt{1-t}}\geq 1- \frac{2\left(\E\! \left[\lvert W_{1,t}^0 \rvert^2\right]\right)^{1/2}
}{\sqrt{1-t}}=1-\frac{2\sqrt{t}}{\sqrt{1-t}}
\geq \frac{1}{2}.
\end{align}
This, the tower property, the Markov property, \eqref{t07}, 
the fact that
$(f(W^0_t))_{t\in[0,1]} $ and $ (W_{1,t}^0)_{t\in[0,1]}$ are independent 
(recall \eqref{t19}),
the fact that Brownian motions have independent increments, and \eqref{t04}
show that for all
$k\in\N_0$, 
$ (t_j)_{j\in[0,k]\cap\Z}\in [0,\frac{1}{17}]^{k+1} $
with $\forall\, j\in[0,k)\colon t_j<t_{j+1}$
 it holds that
\begin{align}
&
\E \!\left[\left\lvert\left[
 g\bigl( W_{t_k}^0+W^{\theta}_1-W^{\theta}_{t_k}\bigr)-g( W_{t_k}^0)\right]
  \frac{ f\bigl(
  W^{\theta}_{1}- W^{\theta}_{t_k}\bigr)
  }{ 1 - t_k }
\prod_{j=1}^{k}
\frac{f(W_{t_j}^0-W^0_{t_{j-1}})}{\sqrt{\varrho(t_{j-1},t_j)}(t_{j}-t_{j-1})}
\right\rvert^2
\right]\nonumber\\
&=
\E\!\left[ \E \!\left[\left\lvert\left[
 g\bigl( W_{t_k}^0+W^{\theta}_1-W^{\theta}_{t_k}\bigr)-g( W_{t_k}^0)\right]
  \frac{ f\bigl(
  W^{\theta}_{1}- W^{\theta}_{t_k}\bigr)
  }{ 1 - t_k }
\right\rvert^2\middle|\F_{t_k}
\right]
\prod_{j=1}^{k}\left\lvert
\frac{f(W_{t_j}^0-W^0_{t_{j-1}})}{\sqrt{\varrho(t_{j-1},t_j)}(t_{j}-t_{j-1})}
\right\rvert^2
\right]\nonumber\\
&
=
\E\!\left[ \E \!\left[\left\lvert\left[
 g\bigl( x+W^{0}_{1-t_k}\bigr)-g( x)\right]
  \frac{ 
 f\bigl( W^{0}_{1-t_k}\bigr)
  }{ 1 - t_k }
\right\rvert^2
\right]\Bigr|_{x=W_{t_k}^0}
\prod_{j=1}^{k}\left\lvert
\frac{f(W_{t_j}^0-W^0_{t_{j-1}})}{\sqrt{\varrho(t_{j-1},t_j)}(t_{j}-t_{j-1})}
\right\rvert^2
\right]\nonumber\\
&\geq \E\!\left[ (d-1)
\frac{(1-t_k)^2-2\left\lvert W_{1,t_k}^0 \right\rvert(1-t_k)^{3/2}}{(1-t_k)^2}
\prod_{j=1}^{k}\left\lvert
\frac{f(W_{t_j}^0-W^0_{t_{j-1}})}{\sqrt{\varrho(t_{j-1},t_j)}(t_{j}-t_{j-1})}
\right\rvert^2
\right]\nonumber\\
&= (d-1)\E\!\left[ \left(1-
\frac{2\left\lvert W_{1,t_k}^0 \right\rvert}{\sqrt{1-t_k}}\right)
\prod_{j=1}^{k}\left\lvert
\frac{f(W_{t_j}^0-W^0_{t_{j-1}})}{\sqrt{\varrho(t_{j-1},t_j)}(t_{j}-t_{j-1})}
\right\rvert^2
\right]\nonumber\\
&= (d-1)\left[
1- \frac{2\E\! \left[\lvert W_{1,t_k}^0 \rvert\right]
}{\sqrt{1-t_k}}\right]
\prod_{j=1}^{k}\E\!\left[\left\lvert
\frac{f(W_{t_j}^0-W^0_{t_{j-1}})}{\sqrt{\varrho(t_{j-1},t_j)}(t_{j}-t_{j-1})}
\right\rvert^2\right]\nonumber\\
&\geq (d-1) \frac{1}{2}2^k(d-1)^{k}=(d-1)^{1+k} 2^{k-1}.
\label{t05}
\end{align}
Next, \eqref{c10} and the fact that $f(0)=0$ (see \eqref{t19}) show for all
$m\in\N$, $t\in[0,1)$, $x\in\R^d$ that
\begin{equation}  \begin{split}
  &{V}_{1,m}^{0}(t,x) 
  =  
  \sum_{i=1}^{m} \tfrac{g\bigl(x+W^{(0,0,-i)}_1-W^{(0,0,-i)}_t\bigr)-g(x)}{m}  
   \tfrac{ 
  W^{(0, 0, -i)}_{1}- W^{(0, 0, -i)}_{t}
  }{ 1 - t }.
\end{split}    \end{equation}
This, \eqref{t19}, \eqref{t11}, 
\cref{t01},
 independence and distributional properties of MLP approximations
(see \cite[Lemma~3.2]{hutzenthaler2021overcoming}), and \eqref{t05}
show that for all $k\in\N_0$,
$\color{blue}m\in\N$,
$ (t_j)_{j\in[0,k]\cap\Z}\in [0,\frac{1}{17}]^{k+1} $
with $\forall\, j\in[0,k)\colon t_j<t_{j+1}$ it holds
that
\begin{align}
&
\E\!\left[
\left\lvert
\left(f
\circ 
V_{1,m}^0\right)(t_k, W^0_{t_k})
\prod_{j=1}^{k}
\tfrac{f(W_{t_j}^0-W^0_{t_{j-1}})}{\sqrt{\varrho(t_{j-1},t_j)}(t_{j}-t_{j-1})}
\right\rvert^2\right]\nonumber\\
&=
\E\!\left[\sum_{\nu=2}^{d}
\left\lvert\sum_{i=1}^{m} \tfrac{\left[g\bigl(W_{t_k}^0+W^{(0,0,-i)}_1-W^{(0,0,-i)}_{t_k}\bigr)-g(W_{t_k}^0)\right]}{m}  
   \tfrac{ 
  W^{(0, 0, -i)}_{\nu,1}- W^{(0, 0, -i)}_{\nu,t_k}
  }{ 1 - t_k }\right\rvert^2
\prod_{j=1}^{k}\left\lvert
\tfrac{f(W_{t_j}^0-W^0_{t_{j-1}})}{\sqrt{\varrho(t_{j-1},t_j)}(t_{j}-t_{j-1})}
\right\rvert^2\right]\nonumber\\
&=\sum_{\nu=2}^{d}
\var\!\left[
\sum_{i=1}^{m} \tfrac{\left[g\bigl(W_{t_k}^0+W^{(0,0,-i)}_1-W^{(0,0,-i)}_{t_k}\bigr)-g(W_{t_k}^0)\right]}{m}  
   \tfrac{ W^{(0, 0, -i)}_{\nu,1}- W^{(0, 0, -i)}_{\nu,t_k}  }{ 1 - t_k }
\prod_{j=1}^{k}
\tfrac{f(W_{t_j}^0-W^0_{t_{j-1}})}{\sqrt{\varrho(t_{j-1},t_j)}(t_{j}-t_{j-1})}
\right]\nonumber\\
&\geq 
\frac{m}{m^2}\sum_{\nu=2}^{d}
\var\!\left[
\left[g\bigl(W_{t_k}^0+W^{(0,0,-1)}_1-W^{(0,0,-1)}_{t_k}\bigr)-g(W_{t_k}^0)\right]
   \tfrac{ W^{(0, 0, -1)}_{\nu,1}- W^{(0, 0, -1)}_{\nu,t_k}  }{ 1 - t_k }
\prod_{j=1}^{k}
\tfrac{f(W_{t_j}^0-W^0_{t_{j-1}})}{\sqrt{\varrho(t_{j-1},t_j)}(t_{j}-t_{j-1})}
\right]\nonumber
\\
&= \frac{1}{m}\sum_{\nu=2}^{d}
\E\!\left[
\left\lvert \left[g\bigl(W_{t_k}^0+W^{(0,0,-1)}_1-W^{(0,0,-1)}_{t_k}\bigr)-g(W_{t_k}^0)\right]
   \tfrac{ 
  W^{(0, 0, -1)}_{\nu,1}- W^{(0, 0, -1)}_{\nu,t_k}
  }{ 1 - t_k }\right\rvert^2
\prod_{j=1}^{k}\left\lvert
\tfrac{f(W_{t_j}^0-W^0_{t_{j-1}})}{\sqrt{\varrho(t_{j-1},t_j)}(t_{j}-t_{j-1})}
\right\rvert^2\right]\nonumber\\
&= \frac{1}{m}
\E\!\left[
\left\lvert \left[g\bigl(W_{t_k}^0+W^{(0,0,-1)}_1-W^{(0,0,-1)}_{t_k}\bigr)-g(W_{t_k}^0)\right]
   \tfrac{ f\bigl(
  W^{(0, 0, -1)}_{1}- W^{(0, 0, -1)}_{t_k}\bigr)
  }{ 1 - t_k }\right\rvert^2
\prod_{j=1}^{k}\left\lvert
\tfrac{f(W_{t_j}^0-W^0_{t_{j-1}})}{\sqrt{\varrho(t_{j-1},t_j)}(t_{j}-t_{j-1})}
\right\rvert^2\right]\nonumber\\
&\geq\frac{1}{m}(d-1)^{1+k} 2^{k-1}.
\end{align}
This, \eqref{c06d}, \eqref{c06c}, and the fact that
$\forall\,t_0\in\{0\},\delta\in [0,1],k\in\N\colon \int_{t_0}^\delta\int_{t_1}^{\delta}\ldots\int_{t_{k-1}}^{\delta}
dt_{k}dt_{k-1}\cdots dt_{1}=\frac{\delta^k}{k!}$ show for all
$t_0\in \{0\}$, $k,m\in\N$, $ \delta\in (0,\frac{1}{17}]$ 
that
$\epsDown_{0,m}(1)\geq [\frac{1}{m}(d-1)\frac{1}{2}]^{1/2}$
and
\begin{align}\label{t13}
&
\epsDown_{k,m}(1)\geq 
\left(\int_{t_0}^\delta\int_{t_1}^{\delta}\ldots\int_{t_{k-1}}^{\delta}
\E\!\left[
\left\lvert
\left(f
\circ 
V_{1,m}^0\right)(t_k, W^0_{t_k})
\prod_{j=1}^{k}
\tfrac{f(W_{t_j}^0-W^0_{t_{j-1}})}{\sqrt{\varrho(t_{j-1},t_j)}(t_{j}-t_{j-1})}
\right\rvert^2\right]
dt_{k}dt_{k-1}\cdots dt_{1}
\right)^{1/2}\nonumber\\
&\geq\left[ \frac{(d-1)^{1+k} 2^{k-1}}{m}\frac{\delta^k}{k!}\right]^{1/2}
=\frac{1}{2\sqrt{m}}(2(d-1))^{\frac{1+k}{2}}\frac{\delta^{k/2}}{\sqrt{k!}}
\geq 
\frac{1}{2\sqrt{m}}d^{\frac{1+k}{2}}\frac{\delta^{k/2}}{\sqrt{k!}}
\1_{[2,\infty)}(d)
.
\end{align}
This, \eqref{c06d}, and \eqref{t31b} show for all
$m,n\in\N$,
$ \delta\in (0,\frac{1}{17}]$ that
\begin{align}\begin{split}
&\left(
\E\!\left[
\left\lvert
\left(f
\circ 
V_{n,m}^0\right)(0, 0)
\right\rvert^2
\right]
\right)^{1/2}=\epsDown_{0,m}(n)
\geq 
\left(\tfrac{1}{\sqrt{2m}}\right)^{n-1}\epsDown_{n-1,m}(1)-d^{\frac{n-1}{2}}6^n
\\
&\geq \tfrac{1}{\sqrt{2}}
\left(\tfrac{1}{\sqrt{2m}}\right)^{n}d^{\frac{n}{2}}
\tfrac{\delta^{\frac{n-1}{2}}}{\sqrt{(n-1)!}}
1_{[2,\infty)}(d)
-d^{\frac{n-1}{2}}6^n.
\end{split}\label{t26}\end{align}
Furthermore, observe that for all 
$n,m\in\N$ with
$d\geq 8(2m)^n 17^{n-1} (n-1)!\cdot 36^n$ it holds that
\begin{align}
d^{1/2}\geq 2\sqrt{2}(\sqrt{2m})^n 17^{\frac{n-1}{2}}\sqrt{(n-1)!}\cdot  6^n
\end{align} and
hence
\begin{align}
\tfrac{1}{2\sqrt{2}}
\left(\tfrac{1}{\sqrt{2m}}\right)^{n}d^{\frac{n}{2}}
\tfrac{\left(\frac{1}{17}\right)^{\frac{n-1}{2}}}{\sqrt{(n-1)!}}
\geq d^{\frac{n-1}{2}}6^n
.
\end{align}
This and \eqref{t26} show that  for all 
$n,m\in\N$ with
$d\geq (1224m)^n n!$ it holds that
\begin{align}
&\left(
\E\!\left[
\left\lvert
\left(f
\circ 
V_{n,m}^0\right)(0, 0)
\right\rvert^2
\right]
\right)^{1/2}\geq
\tfrac{1}{\sqrt{8}}
\left(\tfrac{1}{\sqrt{2m}}\right)^{n}d^{\frac{n}{2}}
\tfrac{\left(\frac{1}{17}\right)^{\frac{n-1}{2}}}{\sqrt{(n-1)!}}
\geq
\tfrac{
\left(
\frac{1}{2m}\right)^{\frac{n}{2}}
d^{\frac{n}{2}}
\left(\frac{1}{17}\right)^{\frac{n}{2}}}{\sqrt{n!}}=
\tfrac{d^{n/2}}{(34m)^{n/2}  \sqrt{n!}}
.
\end{align}
This completes the proof of \cref{d01}.
\end{proof}

Finally, since our nonlinearity is bounded by the squared Euclidean norm,
\cref{d01} immediately implies a lower bound for the approixmation errors of
MLP approximations.
\begin{corollary}\label{c12}
Assume \cref{s01}, 
let $\lVert\cdot \rVert\colon\R^d\to[0,\infty)$ be the standard norm,
let
$u \colon  [0,1] \times \R^d \to\R $, and
assume
for all $t\in[0,1]$, $x=(x_1,x_2,\ldots,x_d)\in\R^d$ that
\begin{align}
g(x)=\lvert x_1\rvert, \quad f(x)=\bigl({\textstyle\sum_{j=2}^{d}}\lvert x_j\rvert^2\bigr)^{1/2},\quad\text{and}\quad 
u(t,x)= \frac{1}{(2\pi )^{d/2}}
\int_{\R^d}g\bigl(x+z\sqrt{1-t}\bigr)e^{- \frac{\lVert z \rVert^2}{2}}\,dz.\label{k05}
\end{align}
Then 
\begin{enumerate}[i)]
\item\label{c12a} it holds for all $t\in[0,1)$, $x=(x_1,x_2,\ldots, x_d)\in\R^d$ that
$u|_{[0,1)\times \R^d}\in C^{1,2}([0,1)\times \R^d,\R)$,
$
\bigl( \tfrac{ \partial }{ \partial t } u \bigr)( t, x ) + \tfrac{ 1 }{ 2 } ( \Delta_x u )( t, x ) + (f\circ \nabla_x u)  (t, x)  = 0$, and
$
u(1,x)=g(x)
$, and
\item it holds 
for all 
$n,m\in\N$
 with
$d\geq (1224m)^n n!$ it holds that
\begin{align}\begin{split}
&
\tfrac{d^{n/2}}{(34m)^{n/2}  \sqrt{n!}}-1\\
&
\leq 
\left(
\E\!\left[
\left\lvert{U}_{n,m}^{0}(0,0)-u(0,0)\right \rvert^2\right]+
\E\!\left[
\left\lVert{V}_{n,m}^{0}(0,0)-(\nabla_x u)(0,0)\right \rVert^2\right]\right)^{1/2}\leq d^{\frac{n+1}{2}}6^{n+1}.
\end{split}\end{align}\end{enumerate}
\end{corollary}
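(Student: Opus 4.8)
The plan is to reduce the regularity and PDE assertion to a one-dimensional heat equation, and to obtain the two-sided estimate by combining the upper bound of \cref{m01} with the lower bound of \cref{d01}, using that the seminorm $f$ annihilates the (only possibly nonzero) first coordinate of $\nabla_x u$. For the PDE assertion, since $g(x)=\lvert x_1\rvert$ depends only on $x_1$, the representation \eqref{k05} gives $u(t,x)=v(t,x_1)$ for the one-dimensional Gaussian smoothing $v(t,y)=\tfrac{1}{\sqrt{2\pi(1-t)}}\int_{\R}\lvert w\rvert e^{-(w-y)^2/(2(1-t))}\,dw$ for $t\in[0,1)$, and $v(1,y)=\lvert y\rvert$. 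I would differentiate under the integral sign — justified for $t<1$ by the usual local domination bounds for the Gaussian kernel and its $y$- and $t$-derivatives — to conclude that $v|_{[0,1)\times\R}\in C^\infty$ and that $v$ solves $\partial_t v+\tfrac12\partial^2_{yy}v=0$ on $[0,1)\times\R$. Hence $u|_{[0,1)\times\R^d}\in C^{1,2}$, with $(\partial_t u)(t,x)=(\partial_t v)(t,x_1)$, $(\Delta_x u)(t,x)=(\partial^2_{yy}v)(t,x_1)$ and $(\nabla_x u)(t,x)=((\partial_y v)(t,x_1),0,\ldots,0)$; since $f(z)=(\sum_{j=2}^d\lvert z_j\rvert^2)^{1/2}$, this forces $(f\circ\nabla_x u)(t,x)=0$, so the semilinear term drops out and $u$ satisfies the heat equation because $v$ does, while the terminal condition $u(1,x)=v(1,x_1)=\lvert x_1\rvert=g(x)$ is immediate from \eqref{k05}.

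For the upper bound I would apply \cref{m01} with $\gamma=(\R^d\ni(x_1,\ldots,x_d)\mapsto\lvert x_1\rvert\in\R)$, which together with the present $g$ satisfies \eqref{c01}, while the present $f$ satisfies \eqref{p01}. Using the chi-square distributions of $\lvert W_{1,1}^0\rvert^2$ and of $\lvert f(W_1^0)\rvert^2=\sum_{j=2}^d\lvert W_{j,1}^0\rvert^2$ one computes $\E[\lvert\gamma(W_1^0)\rvert^4]=3$, $\E[\lvert f(W_1^0)\rvert^4]=(d-1)(d+1)\le d^2$, and $g(0)=0$, whence the combined-second-moment estimate of \cref{m01} becomes $(\E[\lvert U_{n,m}^0(0,0)\rvert^2]+\E[\lVert V_{n,m}^0(0,0)\rVert^2])^{1/2}\le 3^{1/4}d^{n/2}6^n\sqrt d$. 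Bounding $\lvert u(0,0)\rvert=\E[\lvert W_{1,1}^0\rvert]\le 1$ and $\lVert(\nabla_x u)(0,0)\rVert\le 1$ — the latter because $u(0,\cdot)$, being the Gaussian smoothing of the $1$-Lipschitz map $g$, is $1$-Lipschitz — and applying the triangle inequality in $\R^{d+1}$ then yields a bound $(3^{1/4}+\sqrt2)\,d^{(n+1)/2}6^n\le d^{(n+1)/2}6^{n+1}$, using $3^{1/4}+\sqrt2<6$ and $d\ge1$.

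For the lower bound I would discard the $U$-term and use $f(z)\le\lVert z\rVert$ together with the subadditivity $f(a)\le f(a-b)+f(b)$ from \eqref{p01} to obtain, with $a=V_{n,m}^0(0,0)$ and $b=(\nabla_x u)(0,0)$, that $\lvert f(V_{n,m}^0(0,0))\rvert\le\lVert V_{n,m}^0(0,0)-(\nabla_x u)(0,0)\rVert+\lVert(\nabla_x u)(0,0)\rVert\le\lVert V_{n,m}^0(0,0)-(\nabla_x u)(0,0)\rVert+1$. Minkowski's inequality then gives $(\E[\lVert V_{n,m}^0(0,0)-(\nabla_x u)(0,0)\rVert^2])^{1/2}\ge(\E[\lvert(f\circ V_{n,m}^0)(0,0)\rvert^2])^{1/2}-1$, and by \cref{d01} the right-hand side is $\ge \tfrac{d^{n/2}}{(34m)^{n/2}\sqrt{n!}}-1$ whenever $d\ge(1224m)^n n!$; since the left-hand side is dominated by $(\E[\lvert U_{n,m}^0(0,0)-u(0,0)\rvert^2]+\E[\lVert V_{n,m}^0(0,0)-(\nabla_x u)(0,0)\rVert^2])^{1/2}$, this completes the argument.

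The only analytically nontrivial step is the PDE assertion, i.e.\ making the differentiation under the integral sign and the ensuing backward heat equation rigorous; this is entirely routine, and the remaining obstacle is merely the bookkeeping needed to collapse the numerical constants in the upper bound into the clean factor $6^{n+1}$.
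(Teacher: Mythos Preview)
Your proposal is correct and follows essentially the same route as the paper: reduce item~\ref{c12a} to the one-dimensional heat equation via the observation that $\nabla_x u$ is supported on the first coordinate (so $f\circ\nabla_x u\equiv 0$), obtain the upper bound from \cref{m01} with $\gamma(x)=\lvert x_1\rvert$ plus the triangle inequality, and obtain the lower bound from \cref{d01} via $\lvert f(V)\rvert\le\lVert V-\nabla_x u\rVert+1$. The paper routes the lower bound slightly differently (it uses $\lVert V\rVert\ge\lvert f(V)\rvert$ and then the reverse triangle inequality for $\lVert\cdot\rVert$), but this is equivalent to your use of the subadditivity of $f$.
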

\begin{proof}[Proof of \cref{c12}]Throughout the proof 
for every $i\in [1,d]\cap\N$ let $ W_{i,1}^0$ be the $i$-th coordinate of $W^0_1$.
First, \eqref{k05} and a standard result on (time reverse) linear heat equations show for all $t\in[0,1)$, $x\in\R^d$ that
\begin{align}\label{k14}
u|_{[0,1)\times \R^d}\in C^{1,2}([0,1)\times \R^d,\R),\quad
\bigl( \tfrac{ \partial }{ \partial t } u \bigr)( t, x ) + \tfrac{ 1 }{ 2 } ( \Delta_x u )( t, x ) =0,\quad\text{and}\quad u(1,x)=g(x).
\end{align} 
Furthermore, \eqref{k05}  
 implies
 for all 
 $t\in[0,1]$, $x=(x_1,x_2,\ldots,x_d),
y=(y_1,y_2,\ldots,y_d)
\in\R^d$ that 
$\lvert u(t,x)-u(t,y)\rvert\leq 
\frac{1}{(2\pi )^{d/2}}
\int_{\R^d}\lvert g(x+z\sqrt{t})- g(y+z\sqrt{t})\rvert e^{- \frac{\lVert z \rVert^2}{2}}\,dz\leq 
\lvert x_1-y_1\rvert$.
This shows for all $t\in[0,1]$, $x\in\R^d$, $i\in [2,d]\cap\N$
that $ \lvert (\tfrac{\partial}{\partial x_1}u)(t,x)\rvert\leq 1$ and 
$  (\tfrac{\partial}{\partial x_i}u)(t,x)=0$.
 This and \eqref{k14}
establish
\eqref{c12a}.

Next, \eqref{c12a},
 the triangle inequality, 
\cref{d01},
and \eqref{k05}
imply for all $m,n\in\N$ with $d\geq (1224m)^n n!$ that
\begin{align}\label{k04}\begin{split}
&\left(
\E\!\left[
\lVert
V_{n,m}^0(0, 0)
-(\nabla_x u)(0,0)
\rVert^2
\right]
\right)^{1/2}\geq \left(
\E\!\left[
\lVert
V_{n,m}^0(0, 0)
\rVert^2
\right]
\right)^{1/2}-1
\geq 
\tfrac{d^{n/2}}{(34m)^{n/2}  \sqrt{n!}}-1
.
\end{split}\end{align}
Next, 
\eqref{k05} shows that
$\lvert f(W_1^0)\rvert^2= \sum_{j=2}^{d}\lvert W_{j,1}^0\rvert^2$ is chi-square distributed with $(d-1)$ degrees of freedom. Hence, 
$\E [\lvert f(W_1^0) \rvert^4]= (d-1)(d+1)\leq d^2 $. This, \cref{m01} (applied with
 $\gamma \gets (\R^d\ni (x_1,\ldots,x_d)\mapsto \lvert x_1\rvert\in\R)$  in the notation of \cref{m01}), and the fact that
$\E[\lvert W_{1,1}^0\rvert^4 ]=3 $ show that
\begin{align}\begin{split}
&
\left(
\E\!\left[
\left\lvert{U}_{n,m}^{0}(0,0)\right \rvert^2\right]+
\E\!\left[
\left\lVert{V}_{n,m}^{0}(0,0)\right \rVert^2\right]\right)^{1/2}
\leq 
\left(\E\!\left[\lvert  W^0_{1,1} \rvert^4\right]\right)^{1/4}
 \max\left\{\left(\E\!\left[\lvert f( W^0_1)\rvert^4\right]\right)^{n/4},1\right\}6^n\sqrt{d}\\
&=3^{1/4}d^{n/2}6^n \sqrt{d}= 3^{1/4}d^{\frac{n+1}{2}}6^n
.
\end{split}\end{align}
This, the triangle inequality, the fact that
$ \forall\, t\in[0,1], x\in\R^d\colon  \lvert (\tfrac{\partial}{\partial x_1}u)(t,x)\rvert\leq 1$,  the fact that
$\forall\,i\in[2,d]\cap\N \colon  (\tfrac{\partial}{\partial x_i}u)(t,x)=0$,
 and \eqref{k05}
show for all
$m,n\in\N$ that
\begin{align}\begin{split}
&
\left(
\E\!\left[
\left\lvert{U}_{n,m}^{0}(0,0)-u(0,0)\right \rvert^2\right]+
\E\!\left[
\left\lVert{V}_{n,m}^{0}(0,0)-(\nabla_x u)(0,0)\right \rVert^2\right]\right)^{1/2}\\
&\leq \left(
\E\!\left[
\left\lvert{U}_{n,m}^{0}(0,0)\right \rvert^2\right]+
\E\!\left[
\left\lVert{V}_{n,m}^{0}(0,0)\right \rVert^2\right]\right)^{1/2}
+\left(
\lvert u(0,0)\rvert^2
+
\lVert(\nabla_x u)(0,0)\rVert^2\right)^{1/2}\\
&\leq  3^{1/4}d^{\frac{n+1}{2}}6^n+\sqrt{1+1}\leq d^{\frac{n+1}{2}}6^{n+1} .
\end{split}\end{align}
This and \eqref{k04} complete the proof of \cref{c12}.
\end{proof}

\section*{Acknowledgements}
The authors are indebted to Arnulf Jentzen for valuable discussion and to Thomas Kruse for pointing out that \eqref{eq:intro.PDE} is the HJB equation of the stochastic control problem
\eqref{eq:intro.stochastic.control}.
This work has been partially funded by the Deutsche Forschungsgemeinschaft (DFG, German 
Research Foundation) through the research grant HU1889/7-1.


\end{document}